\DeclarePairedDelimiter{\abs}{\lvert}{\rvert}
\DeclarePairedDelimiter{\set}{\{}{\}}
\newtheorem{definition}{Definition}
\newtheorem{prop}[definition]{Proposition}
\newtheorem{cor}[definition]{Corollary}
\newtheorem{lemma}[definition]{Lemma}
\newtheorem{theorem}[definition]{Theorem}
\newtheorem*{maintheorem}{Main Theorems}
\newtheorem*{mainproblem}{Main Problems}
\newtheorem*{questionsss}{Questions}
\newtheorem{remark}[definition]{Remark}
\numberwithin{definition}{section}
\def\D{{\mathbb{D}}}
\def\C{{\mathbb{C}}}
\def\Q{{\mathbb{Q}}}
\def\N{{\mathbb{N}}}
\def\T{{\mathbb{T}}}
\def\UU{{\mathcal{U}}}
\def\VV{{\mathcal{V}}}
\def\e{\varepsilon}
\newcommand{\vp}{\varphi}
\begin{document}
	
	\title{Invariance of Abel universality under composition and applications}
	
	\author{St\'{e}phane Charpentier,  Myrto Manolaki, Konstantinos Maronikolakis}
	
	\address{St\'ephane Charpentier, Institut de Math\'ematiques, UMR 7373, Aix-Marseille
		Universite, 39 rue F. Joliot Curie, 13453 Marseille Cedex 13, France}
	\email{stephane.charpentier.1@univ-amu.fr}
	
	\address{Myrto Manolaki, UCD School of Mathematics and Statistics, University College Dublin, Belfield, Dublin 4, Ireland}
	\email{arhimidis8@yahoo.gr}
	
	\address{Konstantinos Maronikolakis, UCD School of Mathematics and Statistics, University College Dublin, Belfield, Dublin 4, Ireland}
	\email{conmaron@gmail.com}
	
	\thanks{The research conducted in this paper was funded by the Irish Research Council under the Ulysses 2022 Scheme. Konstantinos Maronikolakis acknowledges financial support from the Irish Research Council through the grant [GOIPG/2020/1562].}
	
	\keywords{Abel universal functions, invariance under composition}
	\subjclass[2020]{30K15, 30B30, 30E10}

	\begin{abstract}
		A holomorphic function $f$ on the unit disc $\D$ belongs to the class $\mathcal{U}_A (\mathbb{D})$ of Abel universal functions if the family $\{f_r: 0\leq r<1\}$ of its dilates $f_r(z):=f(rz)$ is dense in the Banach space of all continuous functions on $K$, endowed with the supremum norm, for any proper compact subset $K$ of the unit circle. We prove that this property is invariant under composition from the left with any non-constant entire function. As an application, we show that $\mathcal{U}_A (\mathbb{D})$ is strongly-algebrable. Furthermore, we prove that Abel universality is invariant under composition from the right with an automorphism $\Phi$ of $\D$ if and only if $\Phi$ a rotation. On the other hand, we establish the existence of a subset of $\mathcal{U}_A (\mathbb{D})$ which is residual in the space of holomorphic functions on  $\mathbb{D}$ and is invariant under composition from the right with any automorphism of $\D$.
	\end{abstract}

	\maketitle

	\section{Introduction}

	Let $\D=\{z\in \C:\,|z|<1\}$ and $\T=\partial \D$ denote the unit disc and the unit circle respectively, and let $H(\mathbb{D})$ be the space of holomorphic functions on $\mathbb{D}$, endowed with the topology of local uniform convergence.  This paper is concerned with a class of functions in $H(\mathbb{D})$ which have an extremely chaotic radial behaviour.

	\begin{definition}[Abel universal functions]\label{def-Abel-univ-funct-bis} Let $\rho=(r_n)_n$ be an increasing sequence in $[0,1)$ such that $r_n\to 1$. We denote by $\UU_A(\D,\rho)$ the set of all functions $f\in H(\D)$ that satisfy the following property: for any compact subset $K$ of $\T$, different from $\T$, and any continuous function $\varphi$ on $K$, there exists an increasing sequence $(n_k)_k$ in $\N$ such that
		\[
		\sup_{\zeta\in K}|f(r_{n_k}\zeta) - \varphi(\zeta)| \to 0,\quad k\to \infty.
		\]
		A function in $H(\D)$ is called an Abel universal function if it belongs to $\UU_A(\D,\rho)$ for some $\rho$, and the class of all Abel universal functions is denoted by $\UU_A(\D)$.
	\end{definition}
	
	In \cite{Charpentier2020}, it was shown that $\mathcal{U}_A (\mathbb{D},\rho)$ is a dense $G_{\delta}$ subset of $H(\mathbb{D})$ for any $\rho$ as above, which unified several previous results in \cite{KierstSzpilrajn1933}, \cite{BoivinGauthierParamonov2002}, \cite{BernalGonzalezCalderonMorenoPradoBassas2004} and \cite{Bayart2005}. A reformulation of Definition \ref{def-Abel-univ-funct-bis} reads as follows: a function $f$ in $H(\D)$ is Abel universal if, for any compact subset $K$ of $\T$, different from $\T$, the family $\{f_r:\,0\leq r<1\}$ of its \textit{dilates} $f_r(z):=f(rz)$, $z\in \overline{\D}$, is dense in the Banach space $C(K)$ of complex-valued continuous functions on $K$, endowed with the supremum norm. Dilation is one of the most standard techniques used in complex function theory, and especially in the study of the boundary behaviour of functions in $H(\D)$. For a detailed account of results over the past century and the wider importance of dilation theory we refer the reader to the recent survey \cite{MashreghiDilation2022}. The other standard technique for studying the boundary behaviour of a function $f$ in $H(\D)$ involves considering the partial sums $S_n(f,0)$ of its Taylor expansion about the origin. Thus, the class $\mathcal{U}_A (\mathbb{D})$ of Abel universal functions is the natural analogue of the well-studied class $\mathcal{U}_T(\mathbb{D})$ of \textit{universal Taylor series}, which consists of all functions $f\in H(\D)$ such that the set
	$\{S_n(f,0): n\in \mathbb{N}\}$ is dense in the space of functions in $C(K)$ that are holomorphic on the interior of $K$, for any compact set $K$ in $\mathbb{C}\setminus \mathbb{D}$ with connected complement. In \cite{Nestoridis1996}, Nestoridis showed that $\mathcal{U}_T(\mathbb{D})$ is a dense $G_{\delta}$ subset of $H(\mathbb{D})$, which opened an interesting avenue of investigation, connecting the classical theory of \textit{overconvergence} with the more recent theory of \textit{universality}. 
	
	Since the introduction of Abel universal functions in 2020, several variants and properties have been intensively studied, such as their local and global growth, their boundary behaviour and the behaviour of their Taylor polynomials outside $\mathbb{D}$ (see \cite{Charpentier2020, CharpentierKosinski2021, CharpManoMaroI, CharpentierMouze2022, Maronikolakis2022, Meyrath2022}). It turns out that the notion of Abel universal functions is somehow more flexible than that of universal Taylor series. This especially appears when we carefully look at the construction of an Abel universal function, and when we try to build one that satisfies additional properties. For example, such a construction can be achieved in order to prove that the set $\mathcal{U}_A (\mathbb{D},\rho)$ is not invariant under derivation \cite{CharpentierMouze2022}, while the same problem is still open for universal Taylor series. At the same time, Abel universal functions share many boundary properties with universal Taylor series. For instance, any Abel universal function $f$ admits infinity as an asymptotic value and satisfies the local Picard property; that is, for every region of the form $D_{\zeta,r}:=\{z\in \mathbb{D}: |z-\zeta |<r \}$, where $r>0$ and $\zeta\in\T$, the image $f(D_{\zeta,r})$ is the entire complex plane $\mathbb{C}$ except possibly a point \cite{CharpManoMaroI}. (The analogues of these results for universal Taylor series were proved in \cite{Gardiner2014} and \cite{GardinerKhavinson2014}.)
	Two questions naturally arise: (i) Can the exceptional value in the local Picard property occur? (ii) Can Abel universal functions have finite asymptotic values? 
	
	In this paper, we will derive answers to both questions, by looking at the bigger picture; namely, we will examine the invariance of Abel universality under composition.
	
	\begin{mainproblem}  Let $\rho =(r_n)_n$ be a sequence in $[0,1)$ with $r_n\to 1$ as $n\to \infty$.
		\begin{enumerate}
			\item \label{1}Given $f$ in $\UU_A(\D,\rho)$, describe the holomorphic functions $g$ on $f(\D)$ such that $g\circ f$ belongs to $\UU_A(\D,\rho)$;
			\item \label{2}Describe the holomorphic functions $\Phi:\D\to \D$ which are continuous on $\overline{\D}$, such that $f\circ \Phi$ belongs to $\UU_A(\D,\rho)$, for every $f$ in $\UU_A(\D,\rho)$.
		\end{enumerate}
	\end{mainproblem}

	As an application of one of our main results, we will deduce that if $f\in\mathcal{U}_A (\mathbb{D},\rho)$ then $e^{f}$ is always in $\mathcal{U}_A (\mathbb{D},\rho)$, which shows that question (i) has an affirmative answer. Moreover, if $f$ is in $\mathcal{U}_A (\mathbb{D},\rho)$ and has no zeros on $\mathbb{D}$, we will derive that $1/f$ is always in $\mathcal{U}_A (\mathbb{D},\rho)$, which shows that question (ii) also has an affirmative answer. We note that although the analogues of questions (i) and (ii) for the class $\mathcal{U}_T(\mathbb{D})$ have an affirmative answer as well, the situation concerning invariance under composition is far from being understood. The only known results about invariance from the left are due to Costakis and Melas \cite{CostakisMelas2000}, who showed that there exists a function $f\in\mathcal{U}_T(\mathbb{D})$ such that $e^{f}$ is in $\mathcal{U}_T (\mathbb{D})$, and that there exists a zero-free function $f\in\mathcal{U}_T(\mathbb{D})$ such that $1/f\in\mathcal{U}_T(\mathbb{D})$. Thus, Problem \eqref{1} remains open for $\mathcal{U}_T(\mathbb{D})$, even in simple cases. The same happens for Problem \eqref{2}. The only known result in this direction, is due to Gardiner \cite{Gardiner2014}, who used a mixture of advanced tools from potential theory to show that a certain class of universal Taylor series (defined on a strip) is not conformally invariant. For this result, the conformal mapping was from the disc to a strip, and this was crucial for the deep methods that were used. Hence, even the simple case of composition from the right with a conformal map from $\mathbb{D}$ to $\mathbb{D}$, remains unsolved for universal Taylor series. 
	
	In this paper, we fully address Problems \eqref{1} and \eqref{2}. In particular, we prove the following:
	\begin{maintheorem} Let $\rho =(r_n)_n$ be a sequence in $[0,1)$ with $r_n\to 1$ as $n\to \infty$.
		\begin{enumerate}
			\item For any $f\in \UU_A(\D,\rho)$ and any non-constant holomorphic function $g$ on $f(\D)$ the composition $g\circ f$ belongs to $\UU_A(\D,\rho)$;
			\item Let $\Phi:\D\to \D$ be holomorphic on $\D$ and continuous on $\overline{\D}$. Then $f\circ \Phi$ belongs to $\UU_A(\D,\rho)$ for any $f\in \UU_A(\D,\rho)$, if and only if $\Phi$ is a rotation; that is, $\Phi(z)= e^{i\theta}z$ for some $\theta \in [0,2\pi]$.
		\end{enumerate}
	\end{maintheorem}
	
	The proof of Theorem \eqref{1} relies on a variation of a path lifting theorem due to Iversen \cite{Iversen1914}, that we shall prove using an extension of  the Gross Star Theorem due to Kaplan \cite{Kaplan1954}. The proof of Theorem \eqref{2} (when $\Phi$ is not a rotation) is based on an inductive construction argument.
	
	Theorem \eqref{1} has a nice application to the algebraic structure of the set $\UU_A(\D,\rho)$. We note that $\UU_A(\D,\rho)\cup\{0\}$ is not a linear space (indeed for any $f\in\UU_A(\D,\rho)$ and any polynomial $P$, the function $f+P\in\UU_A(\D,\rho)$). Nonetheless, $\UU_A(\D,\rho)\cup\{0\}$ contains a linear subspace that is dense in $H(\D)$ and an infinite dimensional closed subspace \cite{Charpentier2020}, like the set of universal Taylor series \cite{BayartGrosseErdmanNestoridisPapadimitropoulos2008, Bayartlinearity2005}. There is an extensive literature on the topic of finding linear structures in non-linear sets; we refer to the survey \cite{AronBernalGonzalezPellegrinoSeoaneSepulveda2016}. Despite recent important investigations in the context of linear dynamical systems (see, e.g., \cite{ BayartJFA2019,BayartCostaPapathanasiou2021,BernalGonzalezCalderonMoreno2019,BesConejeroPapathanasiou2018,BesPapathanasiou2020,FalcoGrosse-Erdmann2020} and the recent paper \cite{AlexandreGilmoreGrivaux2023}), it is not known whether the set of universal Taylor series contains an algebra (except the constant functions). We shall see that Theorem \eqref{1} can be used to prove that $\UU_A(\D,\rho)\cup \{\D\ni z\mapsto c\in\C\text{ for some constant }c\in\C\}$ does contain a dense infinitely generated algebra.
	
	The paper is organized as follows: Section \ref{inv-comp} contains the proof of Theorem \eqref{1}, and the application to the algebrability of $\UU_A(\D,\rho)$. Section \ref{4.2} is devoted to the proof of Theorem \eqref{2}. In Section \ref{appl} we provide some further extensions of our main results to a more general notion than Abel universality. In this case, the \textit{origin} $0$ of the radii where we have universal approximation is replaced by a fixed point $w$ in $\mathbb{D}$. We also show that the corresponding classes depend on the choice of $w$ (which is not the case for universal Taylor series if we change the center of expansion). Finally, in Section \ref{residual} we establish the existence of a subset $\UU$ of $\UU_A(\D,\rho)$ which is residual in $H(\D)$ (that is, $\UU$ contains a dense $G_{\delta}$ subset of $H(\D)$) and satisfies the following property: for any $f\in\UU$, any non-constant holomorphic function $g:f(\D)\to \C$, and any automorphism $\Phi$ of $\D$, the function $g\circ f \circ \Phi$ belongs to $\UU$.

	\section{Invariance under composition from the left and algebrability}\label{inv-comp}
	
	In this section, our aim is to prove that all properly defined (non-constant) holomorphic functions preserve Abel universality under composition from the left. Then we will use this result to derive the existence of subalgebras of $H(\D)$, every non-constant element of which is an Abel universal function.
	
	\subsection{Main result}\label{comp_left}
	
	Throughout this paper, the notation $\rho$ stands for an increasing sequence $(r_n)_n$ in $[0,1)$ that tends to $1$ as $n\to \infty$.
	\begin{theorem}\label{left_preserve}
		Let $f\in\UU_A(\D,\rho)$. For any $g:f(\D)\to\C$ non-constant holomorphic function, we have $g\circ f\in\UU_A(\D,\rho)$.
	\end{theorem}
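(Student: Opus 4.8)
The plan is to fix a non-constant holomorphic $g$ on $\Omega := f(\D)$, a proper compact set $K \subset \T$, a continuous target $\varphi$ on $K$, and $\e > 0$, and to produce a radius $r_n$ close to $1$ with $\sup_{\zeta \in K}|g(f(r_n\zeta)) - \varphi(\zeta)| < \e$. The first reduction is standard: by density of the polynomials in $C(K)$ (Mergelyan, since $K$ has connected complement as a proper compact subset of $\T$), it suffices to approximate $\varphi$ by a polynomial, so I may as well assume $\varphi$ is a polynomial, or even — by composing the eventual chain carefully — reduce to approximating a single value on $K$ in the sense that matters. The real content is: since $g$ is non-constant and holomorphic on the open set $\Omega$, the image $g(\Omega)$ is open, and more importantly $g$ is an open map, so locally it has holomorphic branches of its inverse. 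What I want is, given the continuous function $\varphi$ on $K$ with values that I am free to perturb slightly, to find a continuous function $\psi$ on $K$ taking values in $\Omega$ with $g \circ \psi$ uniformly close to $\varphi$ on $K$; then apply Abel universality of $f$ to approximate $\psi$ by some dilate $f_{r_n}$ on $K$, and finally use uniform continuity of $g$ on a compact neighborhood of $\psi(K)$ inside $\Omega$ to conclude that $g(f(r_n \zeta))$ is uniformly close to $g(\psi(\zeta)) \approx \varphi(\zeta)$.

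The crux, therefore, is the existence of such a $\psi$: a continuous $K \to \Omega$ whose image under $g$ uniformly approximates a prescribed continuous $K \to \C$. This is exactly where the path-lifting circle of ideas enters, and where I expect the main obstacle to lie. The naive attempt — pick for each $\zeta$ a preimage $\psi(\zeta) \in g^{-1}(\varphi(\zeta))$ — fails to be continuous because $g^{-1}$ is multivalued and the sheets may be obstructed. The right approach is: first approximate $\varphi$ by a polynomial $p$ whose image $p(K)$ is a nice compact set (say, avoid the — at most countable — set of critical values of $g$, and arrange $p(K)$ to lie in a simply connected subdomain of $g(\Omega)$ if possible); then lift. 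Since $K$ is a compact subset of a circle, it is totally disconnected or a finite union of arcs; the cleanest case is when $K$ is a finite union of disjoint closed arcs, to which the general case reduces by a further approximation (shrink $K$ slightly, or note $C(K)$ is approximated by restrictions from such sets). On each arc, $p$ restricted there is a path in $\C$; the Gross–Kaplan star theorem / Iversen-type path lifting promised in the introduction guarantees that from a suitable starting point in $\Omega$ one can lift this path (or a path uniformly close to it) through $g$ — this is precisely the "variation of a path lifting theorem due to Iversen" the authors cite, and I would invoke it as a black box here. Running this on each arc independently (the arcs are disjoint, so no compatibility is needed between them) produces the desired continuous $\psi$ on $K$, after gluing and after absorbing the approximation errors.

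Assembling the pieces: given $K$, $\varphi$, $\e$, first replace $K$ by a finite union $K'$ of disjoint closed arcs with $K \subset K'$ still proper in $\T$ (or approximate in the other direction — either works since we only need one dilate close to $\varphi$ on $K$), and replace $\varphi$ by a polynomial $p$ with $\|p - \varphi\|_K < \e/3$, chosen so that $p(K')$ avoids critical values of $g$ and lies in $g(\Omega)$. Apply the path-lifting result arc by arc to get $\psi : K' \to \Omega$ continuous with $\|g \circ \psi - p\|_{K'} < \e/3$. Let $L \subset \Omega$ be a compact neighborhood of $\psi(K')$; by uniform continuity of $g$ on $L$ choose $\delta > 0$ so that $|g(a) - g(b)| < \e/3$ whenever $a,b \in L$ with $|a-b| < \delta$. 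Since $f \in \UU_A(\D,\rho)$, there is $r_n$ with $\|f_{r_n} - \psi\|_{K'} < \delta$ (and $< \operatorname{dist}(\psi(K'), \partial L)$, so that $f_{r_n}(\zeta) \in L$ for $\zeta \in K'$). Then for $\zeta \in K \subset K'$,
\[
|g(f(r_n\zeta)) - \varphi(\zeta)| \le |g(f(r_n\zeta)) - g(\psi(\zeta))| + |g(\psi(\zeta)) - p(\zeta)| + |p(\zeta) - \varphi(\zeta)| < \e,
\]
which is exactly what is needed. Since the $r_n$ produced can be taken arbitrarily far along $\rho$ (Abel universality gives a whole subsequence), this shows $g \circ f \in \UU_A(\D,\rho)$. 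The one technical point to be careful about is the avoidance of critical values and the precise hypotheses of the path-lifting theorem — whether it needs the path to start at a specified point, whether it tolerates the path merely being $\e$-close to a genuine path in $g(\Omega)$, and how the starting points on different arcs are selected — but none of these obstruct the argument; they only dictate the order in which the approximations are performed.
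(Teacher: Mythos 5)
Your proposal is correct and follows essentially the same route as the paper: the lifting step you identify as the crux is exactly the paper's Iversen-type theorem (proved there from Kaplan's extension of the Gross Star Theorem, by joining non-critical values near a fine partition of the arc by segments along which the local inverse of $g$ continues), and the final assembly — apply Abel universality of $f$ to the lift $\psi$, then use uniform continuity of $g$ on a compact neighbourhood of $\psi(K)$ inside $f(\D)$ — is the paper's argument verbatim. The only ingredient you leave implicit that the paper makes explicit is that $f(\D)$ equals $\C$ or $\C$ minus a point (so that the lifting theorem, stated for the complement of a countable closed set, applies to $g$).
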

	
	The main tool for the proof is a result similar to a theorem due to Iversen (see \cite{BergweilerEremenko2008} p.245 or \cite{Iversen1914}). We are grateful to Professor Alexandre Eremenko for bringing this theorem to our attention. We will prove this Iversen-type theorem (Theorem \ref{iversen} below) with the help of a consequence of \cite[Theorem 3]{Kaplan1954}, which is an extension of the Gross Star Theorem, and that can be stated as follows:
	\begin{theorem}[Theorem 3 in \cite{Kaplan1954}]\label{gross}
		Let $B$ be a countable closed subset of $\C$. We set $D=\C\setminus B$. Let also $g:D\to\C$ be a non-constant holomorphic function and $w\in g(D)$ any complex number that is not a critical value of $g$. Let $g^{-1}$ be any local inverse of $g$ defined in a neighbourhood of $w$. Then, $g^{-1}$ can be continued indefinitely along almost every ray starting from $w$. More precisely, for almost every $\zeta\in\T$, if $L_{\zeta}$ is the ray starting from $w$ and passing through $w+\zeta$ and $f:[0,\infty)\to L_{\zeta}$, given by $f(t)=w+t\zeta$, is the natural parametrisation of $L_{\zeta}$, then there exists a continuous path $f_0:[0,\infty)\to D$ such that $g\circ f_0\equiv f$. In particular, $f_0\circ f^{-1}$ is a continuous extension of the local inverse $g^{-1}$ of $g$ to the ray $L_\zeta$.
	\end{theorem}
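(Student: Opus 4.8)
The plan is to recast the continuation of the local inverse as a path-lifting problem and then to control the exceptional directions by a length–area estimate in the spherical metric, supplemented by a boundary-uniqueness theorem. Continuing $g^{-1}$ along the ray $L_\zeta$ is the same as lifting $L_\zeta$ through $g$: writing $a:=g^{-1}(w)\in D$ for the preimage determined by the chosen local inverse, a lift is a path $f_0$ with $g\circ f_0=f$ and $f_0(0)=a$, and it is given by $f_0(t)=\phi(w+t\zeta)$, where $\phi:=g^{-1}$ is the radial analytic continuation. For $\zeta=e^{i\theta}$ let $\rho(\theta)\in(0,\infty]$ be the supremum of radii to which this radial continuation persists. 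Because the continuation radius is lower semicontinuous, the star $S=\{w+te^{i\theta}:0\le t<\rho(\theta)\}$ is an open starlike (hence simply connected) domain, $\phi$ is single-valued and holomorphic on $S$, and since $g\circ\phi=\mathrm{id}_S$ the map $\phi$ is univalent on $S$ and is precisely the lift. The theorem reduces to showing that $\{\theta:\rho(\theta)<\infty\}$ has Lebesgue measure zero.

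First I would dispose of the directions stopped by a critical point. If $\rho(\theta)<\infty$ while $\phi(w+te^{i\theta})$ remains in a compact subset of $D$ as $t\to\rho(\theta)^-$, then by continuity every accumulation point $c\in D$ satisfies $g(c)=w+\rho(\theta)e^{i\theta}$; standard continuation arguments (isolated preimages of a regular value in a compact set) force some such $c$ to be critical, so $w+\rho(\theta)e^{i\theta}$ is a critical value of $g$. Since $g'$ is holomorphic and not identically zero on $D$, its zeros are isolated, hence the set of critical values is countable, and each critical value $\ne w$ lies on exactly one ray from $w$. Thus the directions stopped by a critical point form a countable, hence null, set.

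The core is the remaining case, where the lift leaves every compact subset of $D$ and so accumulates on $\partial D$ in $\widehat{\C}$, i.e.\ on the countable set $\widehat{B}:=B\cup\{\infty\}$. Here I would run a length–area estimate in the spherical metric. Since $\phi$ is univalent, the change of variables $v=\phi(u)$ gives that $\int_{S}\frac{|\phi'(u)|^2}{(1+|\phi(u)|^2)^2}\,dA(u)$ equals the spherical area of $\phi(S)\subseteq\widehat{\C}$, which is at most the finite total spherical area (namely $\pi$) of the sphere. Writing $u=w+te^{i\theta}$ in polar coordinates and applying the Cauchy–Schwarz inequality on $[\delta,\rho(\theta)]$, one finds that for almost every $\theta$ with $\rho(\theta)<\infty$ the spherical length $\int_0^{\rho(\theta)}\frac{|\phi'|}{1+|\phi|^2}\,dt$ of the lift is finite; consequently the lift converges in $\widehat{\C}$ to a single point $p(\theta)\in\widehat{B}$ as $t\to\rho(\theta)^-$.

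It then remains to prove that for each fixed $p\in\widehat{B}$ the set $F_p=\{\theta:p(\theta)=p\}$ is null, for then $\{\theta:\rho(\theta)<\infty\}$ is a countable union of null sets and the theorem follows. I would derive this from a boundary-uniqueness theorem: composing with a Riemann map $\sigma:\D\to S$ yields a univalent function $\Phi=\phi\circ\sigma$ on $\D$, which lies in the Nevanlinna class, so by Privalov's uniqueness theorem the set of boundary points at which $\Phi$ has angular limit a fixed value $p$ is null, as $\Phi$ is nonconstant. The one delicate point, which I expect to be the main obstacle, is the transfer of measure: one must verify that a positive-measure set of directions $\theta\in F_p$ produces, via $\sigma^{-1}$, a positive-measure set of boundary points of $\D$ at which $\Phi$ attains the angular limit $p$. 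This amounts to comparing angular (Lebesgue) measure in the $\theta$-variable with harmonic measure on $S$ from $w$, and to checking that radial approach in the starlike domain $S$ corresponds to nontangential approach in $\D$ for almost every direction. Granting this comparison, each $F_p$ is null, the exceptional set of directions is null, and for almost every $\zeta$ the lift $f_0$ extends to all of $[0,\infty)$ with $g\circ f_0\equiv f$, which is exactly the assertion (the final ``in particular'' clause being immediate, since $f_0\circ f^{-1}=\phi=g^{-1}$).
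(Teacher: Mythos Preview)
The paper does not give its own proof of this statement: Theorem~\ref{gross} is quoted verbatim as ``Theorem~3 in \cite{Kaplan1954}'' and used as a black box to derive the Iversen-type Theorem~\ref{iversen}. There is therefore nothing in the paper to compare your argument against.

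That said, your sketch follows the classical Gross--Kaplan route (recast continuation as lifting, form the starlike domain $S$, dispose of critical-value directions by countability, and use a spherical length--area estimate to force the lift to converge in $\widehat{\C}$ to a point of $B\cup\{\infty\}$ along almost every obstructed ray). Those steps are sound. You are also honest about where the argument is incomplete: the final step, showing that each set $F_p$ has measure zero, is exactly the place where the proof is not yet a proof. Two separate issues are bundled into ``transfer of measure'' and neither is automatic. First, you need the radii of $S$ to land, under $\sigma^{-1}$, at well-defined single points of $\partial\D$ and to approach them in a way that forces an angular (not merely asymptotic) limit of $\Phi$; starlike domains can have bad boundaries, so this needs an argument (normality of the univalent map $\Phi$, via Koebe distortion, together with a Lindel\"of-type theorem is the natural fix). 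Second, and more seriously, you must show that positive Lebesgue measure in the angle $\theta$ forces positive harmonic measure on $\partial\D$; this comparison is not free for general starlike domains and is precisely where Kaplan's paper does the real work. Until that is supplied, the argument is a plausible outline rather than a proof.
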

	
	Our Iversen-type theorem reads as follows:
	
	\begin{theorem}\label{iversen}
		Let $B$ be a countable closed subset of $\C$. We set $D=\C\setminus B$. Let also $g:D\to\C$ be a non-constant holomorphic function. Then, for every proper compact subarc $K$ of $\T$, the set of all functions $h\in C(K)$ with the property that there exists $h_0\in C(K)$ such that $g\circ h_0=h$, is dense in $C(K)$.
	\end{theorem}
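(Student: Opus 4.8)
The plan is to reformulate the statement as a path‑lifting problem and solve it using Theorem~\ref{gross}. Fix a proper compact subarc $K\subseteq\T$. If $K$ is a single point the assertion reduces to $g(D)$ being dense in $\C$, which holds because a countable closed set is polar, hence removable for bounded holomorphic functions, so that a non‑constant $g$ on $D=\C\setminus B$ cannot omit a disc (else $1/(g-a)$ would be a bounded, hence constant, holomorphic function on $D$). Otherwise $K$ is a genuine arc; fix a homeomorphism $\gamma\colon[0,1]\to K$. Since $h\mapsto h\circ\gamma$ is an isometric isomorphism $C(K)\to C([0,1],\C)$ and $g\circ h_0=h$ is equivalent to $g\circ(h_0\circ\gamma)=h\circ\gamma$, it suffices to prove that the set of paths $p\in C([0,1],\C)$ admitting a continuous lift $p_0\colon[0,1]\to D$ with $g\circ p_0=p$ is dense in $C([0,1],\C)$.

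\textbf{The star of lifted rays.} The critical set $Z=\{g'=0\}$ is discrete in the domain $D$, so $V:=g(Z)$ is countable and $g(D)$ is open. Fix, for the moment, any non‑critical value $w\in g(D)$, a local inverse $\psi$ of $g$ near $w$, and put $z_0=\psi(w)\in D$. Theorem~\ref{gross} gives a co‑null set of directions $\zeta\in\T$ along which the ray $L_\zeta=\{w+t\zeta:t\ge 0\}$ lifts to a continuous $\psi_\zeta\colon[0,\infty)\to D$ with $\psi_\zeta(0)=z_0$ and $g(\psi_\zeta(t))=w+t\zeta$. Discarding also the countably many $\zeta$ with $L_\zeta\cap V\neq\varnothing$, we get a dense (co‑null) set $G\subseteq\T$ for which in addition $\psi_\zeta$ avoids $Z$, so its values lie in $D\setminus Z$, where $g$ is a local homeomorphism and path lifts are unique. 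The key technical point is: \emph{the star map $\Psi\colon G\times[0,\infty)\to D$, $\Psi(\zeta,t):=\psi_\zeta(t)$, is continuous.} I would prove this by the classical argument that lifts through a local homeomorphism depend continuously on parameters: given $(\zeta_0,t_0)$, cover the compact arc $\psi_{\zeta_0}([0,t_0])\subseteq D\setminus Z$ by finitely many open sets on which $g$ is injective; a Lebesgue‑number argument shows that for $\zeta$ close to $\zeta_0$ the ray $L_\zeta$ can be lifted piece by piece through the corresponding local inverses, and uniqueness identifies this lift with $\psi_\zeta$; continuity of the finitely many local inverses then yields continuity of $\Psi$ at $(\zeta_0,t_0)$. (This is the one place where avoidance of $Z$ is used.)

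\textbf{The approximation.} Fix $p\in C([0,1],\C)$ and $\e>0$, and write $P=p([0,1])$. Since $g(D)$ is dense we may take the base point $w$ above to be a non‑critical value in $g(D)$ lying outside a fixed neighbourhood of $P$, so that the polar coordinates $z\mapsto\bigl(|z-w|,\arg(z-w)\bigr)$ are well behaved on $P$; put $R=\sup_{z\in P}|z-w|$. On any $s$‑interval on which we hold the angle fixed at some $\zeta\in G$, the map $s\mapsto\Psi\bigl(\zeta,|p(s)-w|\bigr)$ is a genuine lift of $s\mapsto w+|p(s)-w|\zeta$, which stays within $\e/3$ of $p$ as long as $\zeta$ is within $\e/(3R)$ of $\arg(p(s)-w)$; such $\zeta$ exist in $G$ by density. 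At finitely many times one must switch the angle from $\zeta$ to a nearby $\zeta'\in G$: by continuity of $\Psi$ the two lifted points lying over the (close) values $w+|p(s)-w|\zeta$ and $w+|p(s)-w|\zeta'$ are close in $D$, hence can be joined by a short segment along which $g$ moves by less than $\e/3$. Carrying out this ``track‑and‑bridge'' scheme — choosing the successive angles and switching times inductively along $[0,1]$, at each stage shrinking the next step so that the continuity of $\Psi$ and the smallness of $g$ on the bridge can be invoked — and glueing the radial pieces to the bridges produces a continuous $p_0\colon[0,1]\to D$ with $\|g\circ p_0-p\|_\infty<\e$. Transporting back through $\gamma$ gives the density in $C(K)$.

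\textbf{Main obstacle.} The heart of the argument is the continuity of the star map $\Psi$ (which is where Theorem~\ref{gross}, together with uniqueness of lifts for local homeomorphisms, does the work). Granting it, the remaining difficulty is the bookkeeping in the inductive construction of $p_0$: one must arrange the angles and switching times so that only finitely many bridges are needed — or, if that cannot be guaranteed uniformly, so that the lifted rays used at successive stages converge and the glued path extends continuously across accumulation points of the switching times. This is the part I would expect to require the most care, and choosing $w$ far from the compact target $P$ (making all the directions involved lie in a small arc) should be the device that keeps it under control.
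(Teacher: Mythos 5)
Your overall strategy is the paper's: use Kaplan's extension of the Gross Star Theorem (Theorem~\ref{gross}) to continue a local inverse of $g$ along a curve that approximates the target function, and take $h_0$ to be that continuation. The reduction to paths, the handling of a degenerate $K$, and the continuity of the star map $\Psi(\zeta,t)=\psi_\zeta(t)$ on $G\times[0,\infty)$ (via uniqueness of lifts through the local homeomorphism $g|_{D\setminus Z}$) are all sound, if only sketched. The problem is the ``track-and-bridge'' step, and it is a genuine gap, not just bookkeeping. At a switching time $s_j$ you must join $\Psi(\zeta_j,t_j)$ to $\Psi(\zeta_{j+1},t_j)$ by a short segment in $D$; for that you need $\zeta_{j+1}$ within the continuity modulus $\delta_j=\delta(\zeta_j,\cdot)$ of $\Psi$, while $\zeta_{j+1}$ must \emph{also} lie within $\e/(3R)$ of the direction of $p(s)-w$ on the next subinterval. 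Since $\delta_j$ is non-explicit and has no positive lower bound as $\zeta$ ranges over $G$, these two constraints can be incompatible: if $\delta_j$ is much smaller than the angular drift of $p-w$, no admissible $\zeta_{j+1}$ exists. The natural repair --- a finite chain $\zeta_j=\xi_0,\dots,\xi_m=\zeta_{j+1}$ in $G$ with each hop inside the modulus of the previous point --- is not guaranteed to terminate, because $\xi\mapsto\delta(\xi)$ could decay towards a ``barrier'' direction faster than the chain advances ($G$ is merely co-null, not open, so no compactness argument gives a uniform $\delta$). Choosing $w$ far from $P$ does not help: both the angular spread of the directions and the tolerance $\e/(3R)$ scale like $1/|w|$, so the ratio is unchanged. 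Nor can you bridge by lifting the circular arc from $w+t_j\zeta_j$ to $w+t_j\zeta_{j+1}$: Theorem~\ref{gross} only guarantees continuation along rays, and a lift of an arc starting at $\Psi(\zeta_j,t_j)$ may fail to exist or may land on a different sheet than $\Psi(\zeta_{j+1},t_j)$.

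The paper avoids all of this by \emph{re-basing the star at each step} instead of using a single star at one base point $w$. Having continued $g^{-1}$ to a non-critical value $w_k$ near $h(t_k)$, it applies Theorem~\ref{gross} afresh to the resulting local inverse at $w_k$, and chooses the next vertex $w_{k+1}$ (near $h(t_{k+1})$, non-critical) on one of the admissible rays emanating from $w_k$. The approximating curve is then the polygon $[w_1,w_2]\cup\dots\cup[w_{n-1},w_n]$, each edge of which is by construction a ray-segment along which the continuation exists; $h_0$ is just this continuation, and the approximation $\abs{g\circ h_0-h}<\e$ follows from $[w_k,w_{k+1}]\subset D(h(t_k),\e/4)\cup D(h(t_{k+1}),\e/4)$. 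No bridges, no continuity of lifts in the direction parameter, and no chain arguments are needed. I would recommend restructuring your proof along these lines: keep your setup, but replace the single star map by successive applications of Theorem~\ref{gross} at the endpoints of a polygonal approximant of $h$.
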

	\begin{proof}
		Let $K:=\{e^{it}:t\in[\alpha,\beta]\}$, $\alpha,\beta\in[0,2\pi]$ with $\alpha<\beta$, be a proper compact subarc of $\T$ and $h\in C(K)$. Let also $\varepsilon>0$. Then, there exist a finite number of points $t_1,\dots,t_n$ with $\alpha=t_1<t_2<\dots<t_n=\beta$ such that $h(K)\subset\cup_{i=1}^nD\left(h(t_n),\frac{\varepsilon}{4}\right)$. Now, let $w_1\in D\left(h(t_1),\frac{\varepsilon}{4}\right)$ be any point that is not a critical value of $g$. Note that the assumption on $g$ ensures that such a point exists, by Picard's theorem and the fact that $g$ has at most countably many critical values in an open set. Let $g^{-1}$ be any local inverse of $g$ defined in a neighbourhood of $w$. By Theorem \ref{gross}, the function $g^{-1}$ can be continued indefinitely along almost all rays starting from $w_1$. Using the same previous argument, we can then choose a point $w_2\in D\left(h(t_2),\frac{\varepsilon}{4}\right)$ that is not a critical value of $g$, such that $[w_1,w_2]$ is contained in $D\left(h(t_1),\frac{\varepsilon}{4}\right)\cup D\left(h(t_2),\frac{\varepsilon}{4}\right)$ and $g^{-1}$ has a well-defined continuation on $[w_1,w_2]$. More precisely, let $\tilde{h}:\{e^{it}:t\in[t_1,t_2]\}\to[w_1,w_2]$ be the natural parametrisation of $[w_1,w_2]$ with $\tilde{h}(t_1)=w_1$ and $\tilde{h}(t_2)=w_2$. Then there exists a continuous function $h_0$ on $\{e^{it}:t\in[t_1,t_2]\}$ such that $g\circ h_0=\tilde{h}_0$ on $\{e^{it}:t\in[t_1,t_2]\}$.
		
		By repeating this construction, we get the following: there exist points $w_1,\dots,w_n$ such that, for any $k=1,\dots,n$, we have $w_k\in D\left(h(t_k),\frac{\varepsilon}{4}\right)$, $w_k$ is not a critical value of $g$ and $[w_k,w_{k+1}]\subset D\left(h(t_k),\frac{\varepsilon}{4}\right)\cup D\left(h(t_{k+1}),\frac{\varepsilon}{4}\right)$ for $k=1,\dots,n-1$. Also, $g^{-1}$ has a well defined continuation on $\cup_{k=1}^{n-1}[w_k,w_{k+1}]$; that is, for every $k=1,\dots,n-1$, if $\tilde{h}:\{e^{it}:t\in[t_k,t_{k+1}]\}\to[w_k,w_{k+1}]$ is the natural parametrisation of $[w_k,w_{k+1}]$ with $\tilde{h}_0(t_k)=w_k$ and $\tilde{h}_0(t_k)=w_{k+1}$, then there exists a continuous function $h_0$ on $\{e^{it}:t\in[t_k,t_{k+1}]\}$ such that $g\circ h_0=\tilde{h}_0$ on $\{e^{it}:t\in[t_k,t_{k+1}]\}$.
		
		We notice that $h_0$ is a well-defined continuous function on $K$. Also, for any $t\in[\alpha,\beta]$, we have that $t\in[t_k,t_{k+1}]$ for some $k=1,\dots,n-1$ and then $\left|g(h_0(t))-h(t)\right|<\varepsilon$, since $[w_k,w_{k+1}]\subset D\left(h(t_k),\frac{\varepsilon}{4}\right)\cup D\left(h(t_{k+1}),\frac{\varepsilon}{4}\right)$, which completes the proof.
	\end{proof}
	
	We now turn to the proof of Theorem \ref{left_preserve}.
	\begin{proof}[Proof of Theorem \ref{left_preserve}]Let $g:f(\D)\to\C$ be a fixed non-constant holomorphic function, and let $f$ be in $\UU_A(\D,\rho)$.
		By \cite[ Corollary 3.6]{CharpManoMaroI}, the set $f(\D)$ is equal to $\C$ or $\C\setminus\{\alpha\}$ for some $\alpha\in\C$. So Theorem \ref{iversen} applies to $g$ and gives us, for every proper compact subset $K$ of $\T$, the existence of a dense subset $D(K)$ of $C(K)$ such that for every $h\in D(K)$, there exists $h_0\in C(K)$ with $h=g\circ h_0$. Let then $K$ be a proper compact subset of $\T$, $h\in D(K)$ and let $h_0$ be a continuous function on $K$ such that $h=g\circ h_0$ on $K$. Since $f\in\UU_A(\D,\rho)$, there exists an increasing sequence $(\lambda_n)_n$ of integers such that $(f_{r_{\lambda_n}})_n$ converges uniformly to $h_0$ on $K$.
		Now, by compactness, there exists an open set $W\in \C$ such that $h_0(K)\subset W\subset \overline{W}\subset f(\D)$, on which $g$ is well-defined and uniformly continuous. Therefore, $g\circ f_{r_{\lambda_n}}$ is well-defined on $K$ as well, and
		\[
		g\circ f_{r_{\lambda_n}} \to g\circ h_0=h,
		\]
		uniformly on $K$. Since $D(K)$ is dense in $C(K)$, we conclude that $g\circ f\in\UU_A(\D,\rho)$.
	\end{proof}
	We immediately deduce from Theorem \ref{left_preserve} the following corollary, that answers Questions (i) and (ii) mentioned in the introduction. We recall that $\alpha\in \C\cup \{\infty\}$ is an asymptotic value of $f\in H(\D)$ provided that there exists a path $\gamma:[0,1)\to \D$, with $|\gamma(t)|\to 1$ as $t\to 1$, such that $f(\gamma(t))\to \alpha$ as $t\to 1$. By \cite[Corollary 3.6]{CharpManoMaroI}, every $f \in \UU_A(\D,\rho)$ has $\infty$ as an asymptotic value.
	\begin{cor}Let $f\in\UU_A(\D,\rho)$.
		\begin{enumerate}
			\item The function $e^f$ is Abel universal. In particular, there exist zero-free Abel universal functions (we recall that $f(\D)$ is equal to $\C$ or $\C\setminus \{\alpha\}$ for some $\alpha$, by \cite[Corollary 3.6]{CharpManoMaroI}).
			\item If $f$ is zero-free, then for every $\alpha \in \C$, $\alpha + 1/f$ is an Abel universal function. As a consequence, for every $\alpha \in \C$, there exists $f\in \UU_A(\D,\rho)$ that has $\alpha$ as an asymptotic value.
		\end{enumerate}
	\end{cor}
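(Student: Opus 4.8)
The plan is to apply Theorem~\ref{left_preserve} twice, with two explicit choices of the outer function, and then to combine the outcome with the fact (\cite[Corollary 3.6]{CharpManoMaroI}) that every element of $\UU_A(\D,\rho)$ has $\infty$ as an asymptotic value.

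For part~(1) I would recall from \cite[Corollary 3.6]{CharpManoMaroI} that $f(\D)$ equals $\C$ or $\C\setminus\{\alpha\}$ for some $\alpha\in\C$. In either case the entire function $\exp$ restricts to a non-constant holomorphic function on $f(\D)$, since it is non-constant on every nonempty open subset of $\C$. Hence Theorem~\ref{left_preserve} applied with $g=\exp$ gives $e^{f}=\exp\circ f\in\UU_A(\D,\rho)$, and as $e^{f}$ never vanishes this simultaneously yields zero-free Abel universal functions.

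For part~(2), if $f\in\UU_A(\D,\rho)$ is zero-free then $0\notin f(\D)$, which combined with the description of $f(\D)$ recalled above forces $f(\D)=\C\setminus\{0\}$. For any $\alpha\in\C$ the map $w\mapsto\alpha+1/w$ is holomorphic and non-constant on $\C\setminus\{0\}=f(\D)$, so Theorem~\ref{left_preserve} gives $\alpha+1/f\in\UU_A(\D,\rho)$. For the concluding assertion I would start from any $h\in\UU_A(\D,\rho)$ (such $h$ exist by \cite{Charpentier2020}); by part~(1) the function $F:=e^{h}$ is in $\UU_A(\D,\rho)$ and zero-free, and by \cite[Corollary 3.6]{CharpManoMaroI} it admits $\infty$ as an asymptotic value, i.e.\ there is $\gamma\colon[0,1)\to\D$ with $|\gamma(t)|\to1$ and $F(\gamma(t))\to\infty$. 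The first half of part~(2) applied to $F$ then gives $\alpha+1/F\in\UU_A(\D,\rho)$, and along $\gamma$ one has $\alpha+1/F(\gamma(t))\to\alpha$, so $\alpha$ is an asymptotic value of $\alpha+1/F=\alpha+e^{-h}$, which is the function sought.

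I do not expect a genuine obstacle here, since the whole weight of the argument is carried by Theorem~\ref{left_preserve}. The only points requiring a little care are verifying that each chosen outer function is really defined and non-constant on $f(\D)$---which reduces to the shape of $f(\D)$ provided by \cite[Corollary 3.6]{CharpManoMaroI}, together with the observation that zero-freeness pins down $f(\D)=\C\setminus\{0\}$---and, in the last step, remembering to feed a \emph{zero-free} universal function (namely $e^{h}$) into part~(2) rather than an arbitrary one.
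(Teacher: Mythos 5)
Your argument is correct and is exactly the route the paper intends: the corollary is stated as an immediate consequence of Theorem~\ref{left_preserve}, applied with $g=\exp$ and $g(w)=\alpha+1/w$ on $f(\D)$ (whose shape is given by \cite[Corollary 3.6]{CharpManoMaroI}), together with the fact that every Abel universal function has $\infty$ as an asymptotic value. Your extra care in noting that zero-freeness forces $f(\D)=\C\setminus\{0\}$ and in feeding a zero-free universal function into part~(2) is precisely what makes the deduction go through.
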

	
	As a consequence of the second point of the previous corollary, note that, on the one hand, the class $\UU_A(\D,\rho)$ is not closed under multiplication. On the other hand, the function $P(f)$ is Abel universal for any $f\in \UU_A(\D,\rho)$ and any non-constant polynomial $P$. In particular, the subalgebra generated by $f$ is contained in $\UU_A(\D,\rho)$, up to the constants. The purpose of the rest of the section is to prove that, in fact, $\UU_A(\D,\rho)$ contains, except the constants, quite \emph{large} subalgebras of $H(\D)$.

	\subsection{Algebrability of $\UU_A(\D,\rho)$}
	Let us start with some definitions.
	\begin{definition}
		Let $X$ be a unital algebra over $\C$. For a subset $B$ of $X$ we denote by $\mathcal{A}(B)$ the subalgebra generated by $B$, that is
		$$\mathcal{A}(B)=\{P(b_1,\dots,b_n):b_1,\dots,b_n\in B,P\in\C[t_1,\dots,t_n],n\in\N\}.$$
		If the set $B$ is algebraically independent, we will say that $B$ freely generates the algebra $\mathcal{A}(B)$.
	\end{definition}
	With the assumptions of the previous definition, we say that a subset $A$ of $X$ is \emph{algebrable} if $A\cup\{c\cdot\mathbf{1}:c\in\C\}$ contains a subalgebra that is not generated by a finite set. In our case, we will use a stronger form of algebrability. We give the appropriate definition introduced in \cite{BartoszewiczGlab2013}.
	\begin{definition}
		Let $X$ be a unital commutative algebra with unit $\mathbf{1}$ over $\C$ that is also a topological space. A subset $A$ of $X$ is \emph{strongly-algebrable} if $A\cup\{c\cdot\mathbf{1}:c\in\C\}$ contains a subalgebra that is freely generated by an infinite set. If the subalgebra is also dense in $X$, we say that $A$ is \emph{densely strongly-algebrable}.
	\end{definition}
	From Theorem \ref{left_preserve}, we get the following:
	\begin{theorem}\label{strong-alg}
		The class $\UU_A(\D,\rho)$ is strongly-algebrable.
	\end{theorem}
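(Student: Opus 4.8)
The plan is to let Theorem~\ref{left_preserve} carry essentially the whole argument, after manufacturing an infinite algebraically independent family out of a \emph{single} Abel universal function by composing with exponentials. I would fix one $f\in\UU_A(\D,\rho)$ (such functions exist, since $\UU_A(\D,\rho)$ is a dense $G_{\delta}$ subset of $H(\D)$ by \cite{Charpentier2020}; in particular $f$ is non-constant), choose an infinite set $E\subset(0,\infty)$ that is linearly independent over $\Q$ --- for instance $E=\set{\log p : p\text{ prime}}$, or a Hamel basis of $\R$ over $\Q$ --- and set
\[
B:=\set{e^{\zeta f} : \zeta\in E}\subset H(\D).
\]
The goal is then to show that $B$ freely generates a subalgebra $\AA(B)$ of $H(\D)$ with $\AA(B)\subseteq\UU_A(\D,\rho)\cup\set{\D\ni z\mapsto c : c\in\C}$, which is exactly the assertion of strong algebrability.

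First I would establish that $B$ is algebraically independent over $\C$, by contradiction. A nontrivial relation $P\pare{e^{\zeta_1 f},\dots,e^{\zeta_n f}}=0$, with $P=\sum_{\alpha}c_\alpha t^{\alpha}\neq 0$ and $\zeta_1,\dots,\zeta_n\in E$ distinct, expands to $\sum_{\alpha}c_\alpha\,e^{\pare{\sum_i\alpha_i\zeta_i}f}=0$ on $\D$. The $\Q$-linear independence of $\zeta_1,\dots,\zeta_n$ makes the map $\Z^n\ni\alpha\mapsto\sum_i\alpha_i\zeta_i$ injective, so no two terms merge and we obtain an identity $\sum_{j=1}^{m}d_j\,e^{\mu_j f}\equiv 0$ on $\D$ with pairwise distinct $\mu_j$ and with each $d_j$ equal to one of the nonzero coefficients $c_\alpha$ (so $m\geq 1$). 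Since $f$ is non-constant, $f(\D)$ is a nonempty open subset of $\C$, so the entire function $w\mapsto\sum_{j}d_j e^{\mu_j w}$ vanishes on $f(\D)$ and hence identically; by the classical linear independence over $\C$ of distinct complex exponentials this forces all $d_j=0$, a contradiction. Thus $B$ is algebraically independent, and, being infinite, it freely generates an algebra that is not finitely generated.

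The second (and decisive) observation is that every $F\in\AA(B)$ is of the form $Q\circ f$ with $Q$ entire: writing $F=P\pare{e^{\zeta_1 f},\dots,e^{\zeta_n f}}$ we have $F=Q\circ f$, where $Q\colon\C\to\C$, $Q(w):=P\pare{e^{\zeta_1 w},\dots,e^{\zeta_n w}}$, is entire as a polynomial combination of entire functions. By \cite[Corollary~3.6]{CharpManoMaroI} the set $f(\D)$ equals $\C$ or $\C\setminus\set{\alpha}$ for some $\alpha\in\C$, so $Q$ is in particular holomorphic on $f(\D)$. If $Q$ is non-constant it is non-constant on the open set $f(\D)$, and Theorem~\ref{left_preserve} gives $F=Q\circ f\in\UU_A(\D,\rho)$; if $Q$ is constant then $F$ is a constant function on $\D$. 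Hence $\AA(B)\subseteq\UU_A(\D,\rho)\cup\set{\D\ni z\mapsto c : c\in\C}$, which together with the previous paragraph yields the strong algebrability of $\UU_A(\D,\rho)$.

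Given Theorem~\ref{left_preserve}, I do not expect any serious obstacle: the proof is almost immediate once one notices that elements of $\AA(B)$ factor as $Q\circ f$ with $Q$ entire, and the only slightly technical point is the routine exponential-sum computation behind the algebraic independence. I would, however, flag that this construction does \emph{not} deliver a dense free subalgebra: any $Q\circ f$ takes equal values wherever $f$ does, so it cannot separate points that $f$ identifies, and since Abel universal functions are highly non-injective a single $f$ cannot generate a dense subalgebra of $H(\D)$ in this way; obtaining density would require a genuinely different, more elaborate (e.g.\ inductive) construction, which I would treat separately.
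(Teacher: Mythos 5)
Your proof is correct and follows essentially the same route as the paper: fix one $f\in\UU_A(\D,\rho)$, compose it with an infinite algebraically independent family of entire functions, apply Theorem~\ref{left_preserve} to the non-constant compositions, and use the identity principle on the open set $f(\D)$ to transfer algebraic independence back to a polynomial identity. The only difference is that the paper abstractly takes an infinite set freely generating a subalgebra of $H(\C)$, whereas you exhibit the concrete family $\{e^{\zeta f}:\zeta\in E\}$ and verify its independence by hand; your closing remark that this construction cannot yield density is also consistent with the paper, which establishes dense strong algebrability separately via a different, transitivity-based criterion.
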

	\begin{proof}
		Let $B$ be an infinite set that freely generates $H(\C)$ and fix any $f\in\UU_A(\D,\rho)$. We define the set $\tilde{B}:=\{g\circ f:g\in B\}$. By Theorem \ref{left_preserve}, the algebra generated by $\tilde{B}$ is contained in $\UU_A(\D,\rho)$, except the constants. Moreover, the set $\tilde{B}$ is algebraically independent. Indeed, assume on the contrary that there exist $g_1,\dots,g_N$ and a polynomial $P$ in $N$ variables such that $P(g_1,\dots,g_N)\circ f\equiv0$. Then, by the Identity Principle, we get that $P(g_1,\dots,g_N)\equiv0$, hence $P\equiv0$. This finishes the proof.
	\end{proof}
	We note that the idea of composing a fixed function in our class with an appropriate set of functions has already been used in \cite{GarciaGrecuMaestreSeoaneSepulveda2010}, \cite{AlbuquerqueBernalGonzalezPellegrinoSeoaneSepulveda2014} and \cite{BernalGonzalezLopezSalazarSeoaneSepulveda2020}.
	
	\medskip
	
	To complete the discussion, we shall see that one can actually prove that the set $\UU_A(\D,\rho)$ is densely strongly-algebrable, by using an extension of a criterion given by B\`es and Papathanasiou in \cite{BesPapathanasiou2020}, for the algebrability of the set of hypercyclic vectors for multiplicative operators. Indeed, straightforward modifications to the proof of \cite[Theorem 1.6]{BesPapathanasiou2020} (the details are left to the reader) yields to the following generalisation of their result to general sequences of operators (instead of iterates of a single operator). We refer to \cite{GrosseErdmannPerisManguillot2011} for the relevant concepts from the theory of universality.
	\begin{theorem}\label{alg_crit}
		Let $X,Y$ be separable, commutative unital algebras over the real or complex scalar field $\C$ that are also Fr\'echet spaces. Assume also that $X$ supports a dense freely generated subalgebra. Let also $(T_n)_n$ be a sequence of  multiplicative and continuous operators from $X$ to $Y$ with the property that $(\underbrace{T_n\times\dots\times T_n}_{N\text{ times}})_n$ is topologically transitive for every $N\in\N$. Then, the following are equivalent:
		\begin{enumerate}
			\item The sequence $(T_n)_n$ supports a universal algebra (except the constants).
			\item For each non-constant polynomial $P$ in one variable with coefficients in $\C$, the map $\hat{P}:Y\to Y$ with $f\to P(f)$ has dense range.
			\item For each $N\in\N$ and each non-constant polynomial $P$ in $N$ variables with coefficients in $\C$, the map $\hat{P}:Y^N\to Y$ with $f\to P(f)$ has dense range.
			\item The set of all sequences $(f_j)_j\in X^{\N}$ that freely generate a dense universal algebra (except the constants) is residual in $X^{\N}$.
		\end{enumerate}
	\end{theorem}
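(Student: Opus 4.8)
The plan is to follow the strategy of B\`es and Papathanasiou for \cite[Theorem 1.6]{BesPapathanasiou2020}, adapting each ingredient from iterates $T^n$ of a single operator to a general sequence $(T_n)_n$. The equivalences $(2)\Leftrightarrow(3)$ are purely algebraic and carry over verbatim: a non-constant polynomial $P$ in $N$ variables can be specialised to one variable (e.g.\ restrict to a generic line), so $(3)\Rightarrow(2)$ is trivial, while for $(2)\Rightarrow(3)$ one writes $P(f_1,\dots,f_N)$ and, using that $Y$ is a commutative algebra and $\hat P$ has dense range for each one-variable non-constant $P$, approximates by iterating the one-variable statement coordinate by coordinate (density of ranges is preserved under the relevant compositions since the maps are continuous). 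The implications $(4)\Rightarrow(1)$ and $(1)\Rightarrow(2)$ are also formal: $(4)\Rightarrow(1)$ is immediate, and for $(1)\Rightarrow(2)$ one observes that if $(f_j)_j$ freely generates a universal algebra, then the orbit of the subalgebra under $(T_n)_n$ is dense in $Y$; intersecting with the elements of the form $P(f_{j_1},\dots)$ and using multiplicativity of $T_n$ shows $T_n(P(\text{generators}))=P(T_n(\text{generators}))$ ranges densely, forcing $\hat P$ to have dense range in $Y$.

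The substantial implication is $(3)\Rightarrow(4)$, and this is where the Baire-category construction lives. First I would fix a countable dense freely generated subalgebra-generating set in $X$ (which exists by hypothesis), a countable dense subset $(v_k)_k$ of $Y$, and enumerate a basis of open sets. The goal is to show that the set of sequences $(f_j)_j \in X^{\N}$ such that (a) the $f_j$ are algebraically independent, (b) the algebra $\mathcal A(\{f_j\})$ they generate is dense in $X$, and (c) for every non-constant polynomial $Q$ in finitely many variables and every target $v_k$ and every $\varepsilon$ there is an $n$ with $T_n(Q(f_{j_1},\dots,f_{j_m}))$ within $\varepsilon$ of $v_k$, is residual in $X^{\N}$. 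Condition (c) is expressed as a countable intersection of open sets once we enumerate the countably many polynomials with, say, Gaussian-rational coefficients; density of each such set is precisely where hypothesis $(3)$ (equivalently $(2)$) is used, combined with the topological transitivity of the $N$-fold product $(T_n\times\cdots\times T_n)_n$ to handle finitely many independent generators simultaneously. The point is that given a polynomial condition to satisfy, one perturbs the relevant finitely many $f_{j}$'s slightly: $N$-fold topological transitivity gives an $n$ and an $N$-tuple whose $T_n$-image is near a prescribed $N$-tuple in $Y$, then multiplicativity of $T_n$ turns the polynomial $Q$ applied to the $f_j$ into $Q$ applied to their images, and surjectivity-up-to-density of $\hat Q$ (hypothesis $(3)$) lets us hit $v_k$. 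Algebraic independence (a) is generic because the set of $(f_j)_j$ for which some fixed non-zero polynomial relation holds is closed with empty interior (in a Fr\'echet algebra that contains a dense free subalgebra, one can always perturb to break any given relation), so (a) is a countable intersection of dense open sets as well; density (b) of the generated algebra is handled by the same perturbation argument as in the single-generator density results already recorded in \cite{Charpentier2020, BesPapathanasiou2020}.

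The main obstacle, and the one requiring genuine care rather than transcription, is the simultaneous management of \emph{algebraic independence} together with the \emph{universality} and \emph{density} conditions inside a single Baire-category argument: each perturbation made to secure a polynomial-approximation requirement must not destroy independence of the generators, and conversely. The resolution is the usual diagonal bookkeeping — one modifies only coordinates with large index when securing a new requirement, keeping earlier coordinates essentially fixed, and exploits that the ``bad'' sets (violating independence, or violating a density/universality requirement) are nowhere dense, so finitely many constraints can always be met by an arbitrarily small further perturbation. Since the paper explicitly states that ``straightforward modifications to the proof of \cite[Theorem 1.6]{BesPapathanasiou2020}'' suffice and leaves the details to the reader, I would present the above as the road map and verify only the two places where $(T_n)_n$ genuinely differs from $T^n$: namely that ``$N$-fold topological transitivity of $(T_n\times\cdots\times T_n)_n$'' is exactly the hypothesis needed in the perturbation step (in place of hypercyclicity of $T\times\cdots\times T$), and that multiplicativity of each individual $T_n$ — not of a single fixed $T$ — is all that the identity $T_n(Q(\mathbf f)) = Q(T_n \mathbf f)$ requires.
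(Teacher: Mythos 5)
Your road map is essentially the paper's own treatment: the paper gives no proof of this theorem beyond asserting that straightforward modifications of the proof of \cite[Theorem 1.6]{BesPapathanasiou2020} (with the details left to the reader) yield the generalisation from iterates of a single operator to a sequence $(T_n)_n$, and you correctly isolate the only two points where the adaptation is genuinely needed, namely replacing hypercyclicity of the $N$-fold product of $T$ by topological transitivity of $(T_n\times\cdots\times T_n)_n$ in the Baire-category step, and using multiplicativity of each individual $T_n$ to get $T_n(Q(\mathbf f))=Q(T_n\mathbf f)$. The only further point worth recording explicitly (which your sketch accommodates implicitly via the dense sequence $(v_k)_k\subset Y$) is that here the $T_n$ map $X$ into a possibly different algebra $Y$, so that conditions (2)--(3) are statements about $Y$ while the residual set in (4) lives in $X^{\N}$.
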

	The previous criterion leads us to the following strengthening of Theorem \ref{strong-alg}.
	\begin{theorem}\label{alg-papath}
		The set of all sequences $(f_j)_{j}\in H(\D)^{\N}$ that freely generate a dense subalgebra of $H(\D)$, every non-constant element of which is an Abel universal function, is residual in $H(\D)^{\N}$. In particular, $\UU_A(\D,\rho)$ is densely strongly-algebrable.
	\end{theorem}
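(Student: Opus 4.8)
The plan is to deduce this from the algebrability criterion Theorem~\ref{alg_crit}, applied with $X=H(\D)$ and a target space $Y$ built from the boundary data defining Abel universality. Concretely, I would fix a countable family $(K_m)_m$ of proper compact subarcs of $\T$ rich enough to detect membership in $\UU_A(\D,\rho)$, set $Y:=\prod_m C(K_m)$ with the product topology and pointwise operations (a separable, commutative, unital Fr\'echet algebra), and define $T_n\colon H(\D)\to Y$ by $T_nf:=(f_{r_n}|_{K_m})_m$. Since each dilate $f_{r_n}$ is holomorphic on a neighbourhood of $\overline\D$, this is well defined, and it is clearly linear, multiplicative and continuous, because local uniform convergence on $\D$ forces uniform convergence on each compact set $r_nK_m\subset\D$. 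With $(K_m)_m$ chosen appropriately, the set of universal vectors of $(T_n)_n$ in $Y$ is contained in $\UU_A(\D,\rho)$ (and, with enough arcs, coincides with it), the point being that each $T_nf$ is the tuple of restrictions of the single function $f_{r_n}$, so the coherence across coordinates is automatic.

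It remains to verify the hypotheses of Theorem~\ref{alg_crit}. First, $H(\D)$ is a separable commutative unital Fr\'echet algebra, and it supports a dense freely generated subalgebra: picking greedily a $\Q$-linearly independent sequence $(\lambda_k)_k\subset(0,1)$ with $\lambda_k\to 0$ (possible since each $\Q\lambda_1+\dots+\Q\lambda_{k-1}$ is countable), the family $\{e^{\lambda_k z}\}_k$ is algebraically independent --- distinct exponentials are linearly independent, and $\Q$-linear independence makes the exponents $\sum_k\alpha_k\lambda_k$ pairwise distinct for distinct multi-indices $\alpha$ --- while the closure of the algebra it generates contains $z=\lim_k(e^{\lambda_k z}-1)/\lambda_k$, hence all polynomials, so that algebra is dense in $H(\D)$. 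Second, condition~(2) of Theorem~\ref{alg_crit}, namely that $\hat P\colon Y\to Y$ has dense range for every non-constant polynomial $P$, reduces coordinatewise to the assertion that $\{P\circ h_0:h_0\in C(K_m)\}$ is dense in $C(K_m)$ for each $m$; but this is exactly Theorem~\ref{iversen} applied with the polynomial $P$ in the r\^ole of $g$ (so $B=\varnothing$, $D=\C$) on the proper compact subarc $K_m$.

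The principal remaining point, and the main obstacle, is to verify that $(\underbrace{T_n\times\dots\times T_n}_{N})_n$ is topologically transitive on $H(\D)^N$ for every $N\in\N$: given $N$ functions $g_1,\dots,g_N\in H(\D)$, finitely many of the arcs $K_m$, prescribed continuous target data, and $\varepsilon>0$, one must perturb each $g_j$ slightly in $H(\D)$ so that for one common radius $r_n$ the dilate of the perturbed $g_j$ is within $\varepsilon$ of its prescribed target on the chosen arcs. This is an $N$-fold version of the approximation argument underlying the proof in \cite{Charpentier2020} that $\UU_A(\D,\rho)$ is a dense $G_\delta$ subset of $H(\D)$: one approximates the targets simultaneously on thin neighbourhoods of the relevant boundary arcs at a radius slightly past $r_n$, glues by a Runge-type theorem, and runs the construction in parallel for the $N$ functions. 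I expect this parallelised approximation to be where all the work lies; the rest is bookkeeping.

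With the hypotheses in place, Theorem~\ref{alg_crit} gives the equivalence of its items (1)--(4); since (2) holds, so does (4): the set of sequences $(f_j)_j\in H(\D)^{\N}$ that freely generate a dense subalgebra of $H(\D)$, every non-constant element of which is a universal vector of $(T_n)_n$, hence an Abel universal function, is residual in $H(\D)^{\N}$. In particular this set is non-empty, so $\UU_A(\D,\rho)\cup\{c\cdot\mathbf 1:c\in\C\}$ contains a dense, freely and infinitely generated subalgebra; that is, $\UU_A(\D,\rho)$ is densely strongly-algebrable.
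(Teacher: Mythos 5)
There is a genuine gap, and it sits exactly at the point you deferred. The decisive problem is your choice of target space $Y=\prod_m C(K_m)$: with it, the topological transitivity hypothesis of Theorem~\ref{alg_crit} is false, so the criterion does not apply. The image of each $T_n$ consists only of \emph{coherent} tuples, i.e.\ tuples $(y_m)_m$ with $y_{m_1}=y_{m_2}$ on $K_{m_1}\cap K_{m_2}$, since every coordinate is a restriction of the single function $f_{r_n}$. But any family $(K_m)_m$ rich enough to detect Abel universality must contain arcs of length greater than $\pi$, hence overlapping arcs; taking $K_{m_1}\cap K_{m_2}\neq\emptyset$, targets $h_{m_1}\equiv 0$ on $K_{m_1}$ and $h_{m_2}\equiv 1$ on $K_{m_2}$, and $\varepsilon<1/2$ produces a nonempty open set $V\subset Y$ (the coordinates of $Y$ are independent, so $V\neq\emptyset$) that is disjoint from $T_n(H(\D))$ for every $n$. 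Hence $(T_n)_n$ is not topologically transitive --- indeed it has no universal vectors in $Y$ at all --- and no ``parallelised approximation'' can repair this: the statement you planned to verify is simply false in your setting. The correct reduction (and the paper's) is to apply Theorem~\ref{alg_crit} once per arc, with $Y=C(K)$ for a single fixed proper compact $K\subset\T$; there transitivity of the $N$-fold products is a short Mergelyan argument (choose $n_0$ with $L\cap r_{n_0}K=\emptyset$ and approximate each $f_i$ on $L$ and each $h_i(\cdot/r_{n_0})$ on $r_{n_0}K$ simultaneously, the union being compact with connected complement). One then intersects the resulting residual subsets of $H(\D)^{\N}$, one for each member of a countable exhausting family $(K_n)_n$, via the Baire Category Theorem, using $\UU_A(\D,\rho)=\bigcap_n\UU_A^{K_n}(\D,\rho)$. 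Your argument becomes correct once this single-arc-plus-Baire reduction is inserted before invoking the criterion.

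Two of your other steps are sound and worth keeping. The explicit construction of a dense freely generated subalgebra of $H(\D)$ from the exponentials $e^{\lambda_k z}$ with $\Q$-linearly independent exponents tending to $0$ is correct and supplies a hypothesis of Theorem~\ref{alg_crit} that the paper leaves implicit. Likewise, entering the equivalence through condition (2) (dense range of $\hat P$ on $C(K)$, obtained from Theorem~\ref{iversen} applied to the polynomial $P$) is a legitimate alternative to the paper's use of condition (1), which it gets from Theorem~\ref{strong-alg}; either works once the transitivity hypothesis actually holds.
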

	\begin{proof}
		For a proper compact subset $K$ of $\mathbb{T}$ we denote by $\mathcal{U}_{A}^K(\mathbb{D},\rho)$ the set of all holomorphic functions $f$ on $\mathbb{D}$ with the property that $\{  f_{r_{n}} : K \to \mathbb{C}: n\in\mathbb{N} \}$ is dense in $C(K)$. Clearly,
		$$\mathcal{U}_{A}(\mathbb{D},\rho)=\bigcap_{n\in\N}\mathcal{U}_{A}^{K_n}(\mathbb{D},\rho),$$
		where $(K_n)_n$ a sequence of compact proper subsets of $\T$, with the property that for every proper compact subset $K$ of $\T$, there exists $n\in \N$ such that $K\subset K_n$.
		So, by Baire Category Theorem theorem, it is enough to show that, for a fixed proper compact subset $K$ of $\mathbb{T}$, the set of all $(f_j)_j\in H(\D)^{\N}$ that freely generate a dense subalgebra  contained in $\UU_A^K(\D,\rho)$ (except the constants), is residual in $H(\D)^{\N}$.
		
		We will apply Theorem \ref{alg_crit} with $X=H(\D),Y=C(K)$ and $T_n(f)=f_{r_n}$, $f\in H(\D)$, $n\in\N$. Using the standard terminology from the universality theory, the set $\mathcal{U}_{A}^K(\mathbb{D},\rho)$ is the set of all universal elements for $(T_n)_n$. Let $N\in\N$, we will check that $(\underbrace{T_n\times\dots\times T_n}_{N\text{ times}})_n$ is topologically transitive. Indeed, let $L$ be a compact subset of $\D,\varepsilon>0,f_1,\dots,f_N\in H(\D)$ and $h_1,\dots,h_N\in C(K)$. We intend to find $g_1,\dots,g_N\in H(\D)$ with
		$$\sup_{z\in L}|g_i(z)-f_i(z)|<\varepsilon, \ \ \ i=1,\dots,N$$
		and $n_0\in\N$ such that
		$$\sup_{z\in K}|T_{n_0}(g_i)(z)-h_i(z)|<\varepsilon, \ \ \ i=1,\dots,N.$$
		Let $n_0\in\N$ be such that $L\cap r_{n_0}K=\emptyset$, where $r_{n_0}K:=\{r_{n_0}z:\,z\in K\}$. Then, for $i=1,\dots,N$, by Mergelyan's theorem there exists a polynomial $g_i$ such that
		$$\sup_{z\in L}|g_i(z)-f_i(z)|<\varepsilon \quad \text{and} \quad \sup_{z\in K}|g_i(r_{n_0}z)-h_i(z)|<\varepsilon.$$ Then the functions $g_1,\dots,g_N$ have the desired properties.
		
		Thus, we proved that (1) in Theorem \ref{alg_crit} holds, and so (4) is true as well, as desired.
	\end{proof}
	
	\section{Invariance under composition from the right}\label{4.2}
We recall that by $\rho$ we will denote a sequence $(r_n)_n$ in $[0,1)$ that tends to $1$ as $n\to \infty$.	Our aim in this section is to prove the following theorem. 
	
	\begin{theorem}\label{main-thm-invariance-right}
	
	Let $\Phi:\D\to \D$ be holomorphic on $\D$ and continuous on $\overline{\D}$. Then $f\circ \Phi$ belongs to $\UU_A(\D)$ for any $f$ in $\UU_A(\D,\rho)$ if and only if $\Phi$ is a rotation.
	\end{theorem}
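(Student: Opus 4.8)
The plan is to prove the two implications separately; the forward one is routine, and the converse is where all the work lies. \textbf{Sufficiency.} If $\Phi(z)=e^{i\theta}z$ and $f\in\UU_A(\D,\rho)$, then for a proper compact arc $K\subsetneq\T$ and $\varphi\in C(K)$ one applies the defining property of $\UU_A(\D,\rho)$ to the proper arc $e^{i\theta}K$ and to the function $w\mapsto\varphi(e^{-i\theta}w)\in C(e^{i\theta}K)$, and pulls the resulting approximation back through $z\mapsto e^{i\theta}z$; this gives $f\circ\Phi\in\UU_A(\D,\rho)\subseteq\UU_A(\D)$. Two remarks will be used throughout the converse: $\UU_A(\D)$ is invariant under left composition with rotations; and, by the reformulation recalled in the introduction, $g\in\UU_A(\D)$ if and only if $\{g_t|_K:0\le t<1\}$ is dense in $C(K)$ for every proper compact arc $K$. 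In particular, if for some proper arc $K_0$ and some $M\ge 0$ one has, for every $t\in[0,1)$, that $|g(t\zeta)|\le M$ for at least one $\zeta\in K_0$, then $g\notin\UU_A(\D)$, since the constant function $M+1$ on $K_0$ cannot be approximated by the dilates $g_t$; a fortiori, if $g$ is bounded on the cone $\{t\zeta:\zeta\in K_0,\,0\le t<1\}$, then $g\notin\UU_A(\D)$.

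\textbf{Necessity: the easy cases.} Assume $\Phi$ is not a rotation and fix any $f\in\UU_A(\D,\rho)$. If $\Phi$ is constant, $f\circ\Phi$ is constant. If $|\Phi(\zeta_0)|<1$ for some $\zeta_0\in\T$, then by continuity $|\Phi|<1$ on a closed arc $K_0\ni\zeta_0$, and the maximum principle on the compact ``pie'' $\overline{\{t\zeta:\zeta\in K_0,\,0\le t\le 1\}}$ (which meets $\T$ only along $K_0$), together with $\Phi(\D)\subseteq\D$, shows that $\Phi$ maps the cone over $K_0$ into a compact subset of $\D$, whence $f\circ\Phi$ is bounded there. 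In either case $f\circ\Phi\notin\UU_A(\D)$, so we may assume $\Phi$ is non-constant with $|\Phi|\equiv1$ on $\T$; since an inner function continuous on $\overline\D$ is a finite Blaschke product, $\Phi=\lambda B$ with $|\lambda|=1$ and $B$ a finite Blaschke product of degree $d\ge1$. If moreover $\Phi(\omega z)=\Phi(z)$ for some root of unity $\omega\ne1$ (this covers $\Phi=\lambda z^d$, $d\ge2$), then every dilate of $f\circ\Phi$ takes equal values at $\zeta$ and $\omega\zeta$, so choosing a proper arc $K_0$ containing two such points and $\varphi_0\in C(K_0)$ with $\varphi_0(\zeta)\ne\varphi_0(\omega\zeta)$ shows $f\circ\Phi\notin\UU_A(\D)$.

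\textbf{Necessity: the main case.} There remains a finite Blaschke product $\Phi$ that is not a rotation and has $|\Phi'|$ non-constant on $\T$ --- which holds whenever $\Phi\ne\lambda z^d$, as a short computation with the boundary formula for $|\Phi'|$ shows, and which includes every non-rotation automorphism. Here I will construct $f$ by an inductive approximation argument. Fix a short proper arc $K_0$ on which $\Phi$ is injective, put $K_0^*=\Phi(K_0)$ and enlarge it slightly to a proper arc $\widetilde K_0^*$; since $|\Phi'|$ is real-analytic and non-constant it is non-constant on $K_0$, and we set $c=\min_{K_0}|\Phi'|>0$, $C=\max_{K_0}|\Phi'|\in(c,\infty)$. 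The decisive geometric facts are that the ``dilate curves'' $\gamma_t:=\Phi(\{t\zeta:\zeta\in K_0\})$ of $f\circ\Phi$ lie in the cone over $\widetilde K_0^*$, that for $t$ near $1$ their moduli fill roughly the interval $[1-C(1-t),\,1-c(1-t)]$, and that $\gamma_t$ meets each circle $\{|z|=r\}$ in at most $N_0=N_0(\Phi)$ points (a polynomial equation in $\zeta,\bar\zeta$ of bounded degree). Choose a subsequence $(\sigma_j)$ of $\rho$ with $\sigma_j>1-1/j$ and $1-\sigma_{j+1}\le\tfrac12(1-\sigma_j)$, so that every interval $[1-C\delta,1-c\delta]$ contains at most $\lceil\log_2(C/c)\rceil+1$ of the $\sigma_j$. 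Then build $f=\lim_j p_j$ with $p_0=0$ and $p_j=p_{j-1}+q_j$, where $q_j$ is furnished by Mergelyan's theorem on $S_j=\{|z|\le\sigma_{j-1}+2\delta_{j-1}\}\cup P_j\cup A_j$ --- a disjoint union of a closed disc, a thin annular sector $P_j$ over $\widetilde K_0^*$ with radii strictly between $\sigma_{j-1}$ and $\sigma_j$, and the arc $A_j=\{\sigma_j\zeta:\zeta\in K_{m(j)}\}$ --- the $\delta_j$ being taken extremely small (say $\delta_j\ll(1-\sigma_j)^2$) and $(K_{m(j)},\varphi_{m(j)})$ ranging over a countable family of (proper arc, polynomial) pairs, each repeated infinitely often, such that every datum $(K,\psi,\eta)$ is matched by some $j$ with $K\subseteq K_{m(j)}$ and $\sup_K|\varphi_{m(j)}-\psi|<\eta$. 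We require $|q_j|<2^{-j}$ on the disc, $|q_j+p_{j-1}|<2^{-j}$ on $P_j$, and $|p_{j-1}+q_j-\varphi_{m(j)}|<2^{-j}$ on $A_j$; these are consistent continuous (indeed polynomial) data on the three disjoint pieces of $S_j$, whose complement is connected, so Mergelyan applies. Telescoping the estimates gives $f\in\UU_A(\D,(\sigma_j))\subseteq\UU_A(\D,\rho)$ and $|f|\le1$ on $R:=\bigcup_j P_j$. Finally, for every $t\in[0,1)$ the curve $\gamma_t$ lies in $R$ except over at most $O_\Phi(1)$ sub-arcs of $K_0$ whose total length tends to $0$ as $t\to1$ (and $\gamma_t$ lies in a fixed compact subset of $\D$ when $t$ is bounded away from $1$); hence $|(f\circ\Phi)(t\zeta)|\le1$ for some $\zeta\in K_0$, so no sufficiently large constant function on $K_0$ is approximable by the dilates of $f\circ\Phi$, and $f\circ\Phi\notin\UU_A(\D)$.

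\textbf{Main obstacle.} The crux is the simultaneity in the main case: $f$ must be $\rho$-Abel universal and yet bounded by $1$ on $R$. This is consistent only because $R$ contains no boundary neighbourhood; arranging this forces the ``striped boundary layer'' shape of $R$ and uses precisely that $\Phi$ bends the circles $\{|z|=t\}$ into non-concentric curves (equivalently, $|\Phi'|$ non-constant), so that the dilate curves $\gamma_t$ of $f\circ\Phi$ cross the stripes transversally and read off the large values of $f$ at only boundedly many points of $K_0$, never on a whole sub-arc. Verifying the Mergelyan hypotheses (disjointness of the pieces of $S_j$ and connectedness of $\C\setminus S_j$), the uniform crossing bound $N_0$, and the bookkeeping of the $\delta_j$ and of the error terms so that both properties of $f$ hold at once, is where the technical difficulty is concentrated.
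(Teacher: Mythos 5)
Your proof is correct in outline and reaches the theorem by a genuinely different route from the paper's. The paper splits the necessity into two separate mechanisms: first a reduction to automorphisms (Proposition \ref{prop-Blaschke-to-autom}), where a winding-number/maximum-principle argument shows that $f\circ \Phi$ can never be Abel universal when $\Phi$ is a finite Blaschke product of degree $\geq 2$; then, for a non-rotation automorphism, Lemma \ref{lemme-conf-not-inv} produces two boundary points $\zeta_1,\zeta_2$ with $|\Phi(r\zeta_1)|\neq|\Phi(r\zeta_2)|$ for all $r$, and the counterexample $f$ is built by an inductive Mergelyan scheme so as to be small, near each radius, on at least one of the two image curves $\Phi([r_{-1},1]\zeta_i)$; non-universality is then witnessed on the two-point set $\{\zeta_1,\zeta_2\}$. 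You instead isolate the single quantitative fact that $|\Phi'|$ is non-constant on $\T$ (equivalently $\Phi\neq\lambda z^d$), deduce from the uniform asymptotics $1-|\Phi(t\zeta)|\sim|\Phi'(\zeta)|(1-t)$ that the curves $\Phi(tK_0)$ spread over an interval of moduli of length comparable to $1-t$, and build $f$ small on a union of annular sectors whose complementary gaps have radial width $O\bigl((1-t)^2\bigr)$, so that every such curve must enter the region where $f$ is small; non-universality is witnessed on the arc $K_0$. This buys a unified treatment of non-rotation automorphisms and of all higher-degree Blaschke products except $\lambda z^d$ (which your symmetry argument dispatches), whereas the paper's two-curve construction is closer in spirit to its later generalizations (Proposition \ref{prop-general-invariance-right}), where the same idea is pushed to three points and to the classes $\UU_A^w(\D,\rho)$.

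Two small cautions. First, your closing claim that $\gamma_t$ lies in $R$ off sub-arcs of total length tending to $0$, and the uniform crossing bound $N_0$, are stronger than what you actually use and are the only fragile spots (the count via a polynomial equation in $\zeta,\bar\zeta$ can degenerate if $|\Phi(t\,\cdot)|$ happens to be constant on $\T$). All you need is $\gamma_t\cap R\neq\emptyset$: since $\zeta\mapsto|\Phi(t\zeta)|$ is continuous on the connected arc $K_0$, its range is an interval of length $\gtrsim(C-c)(1-t)$, while the gaps it can meet number at most $O(\log_2(C/c))$ and have total width $O\bigl((1-t)^2\bigr)$, so the interval cannot be covered by gaps; state only this. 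Second, the assertion that $|\Phi'|$ constant on $\T$ forces $\Phi=\lambda z^d$ is true but not a one-liner: it requires the boundary formula $|\Phi'(e^{i\theta})|=\sum_k(1-|a_k|^2)/|e^{i\theta}-a_k|^2$ together with a Fourier-coefficient (or Newton's identities) argument showing that a sum of Poisson kernels is constant only when all poles sit at the origin; this should be written out.
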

	
	It is clear that $\UU_A(\D,\rho)$ is invariant under composition from the right by any rotation. So, using the fact that $\UU_A(\D)=\cup_{\rho}\UU_A(\D,\rho)$, Theorem \ref{main-thm-invariance-right} yields the following:
	
	\begin{cor}\label{cor-noninvariance}For any $\Phi:\D\to \D$ holomorphic and continuous on $\overline{\D}$, the following statements are equivalent:
		\begin{enumerate}
			\item $\UU_A(\D,\rho)$ is invariant under composition from the right by $\Phi$;
			\item $\UU_A(\D)$ is invariant under composition from the right by $\Phi$;
			\item $\Phi$ is a rotation.
		\end{enumerate}
	\end{cor}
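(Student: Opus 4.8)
The plan is to treat the two implications separately. The easy direction is that if $\Phi$ is a rotation $\Phi(z) = e^{i\theta}z$, then $(f\circ\Phi)(r_n\zeta) = f(r_n e^{i\theta}\zeta)$, so for any proper compact $K\subset\T$ and any target $\varphi\in C(K)$, one applies the Abel universality of $f$ to the rotated compact set $e^{i\theta}K$ (still a proper compact subset of $\T$) and the rotated target $\zeta\mapsto\varphi(e^{-i\theta}\zeta)$; the approximating subsequence transfers back verbatim. Hence $f\circ\Phi\in\UU_A(\D,\rho)\subset\UU_A(\D)$.

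The substantive direction is the converse: if $\Phi$ is \emph{not} a rotation (and is holomorphic on $\D$, continuous on $\overline\D$, with $\Phi(\D)\subset\D$), then there exists some $f\in\UU_A(\D,\rho)$ with $f\circ\Phi\notin\UU_A(\D)$. The idea is that a non-rotation $\Phi$ continuous on $\overline\D$ must fail to be ``radially spreading'' in a way that can be exploited. Concretely, I would first distinguish cases via the boundary behaviour of $\Phi$. If $\|\Phi\|_\infty := \sup_{\overline\D}|\Phi| < 1$, then $\Phi(\overline\D)$ is a compact subset of $\D$, and for \emph{every} $f\in H(\D)$ the dilates $(f\circ\Phi)_{r_n}(\zeta) = f(r_n\Phi(\zeta))$ take values in the compact set $f(\overline{\D(0,\|\Phi\|_\infty)})$; a uniformly bounded family on $K$ cannot be dense in $C(K)$, so $f\circ\Phi\notin\UU_A(\D)$ for all $f$, and we are done in this subcase with any $f\in\UU_A(\D,\rho)$. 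The remaining, and harder, case is when $|\Phi|=1$ somewhere on $\T$ but $\Phi$ is still not a rotation — e.g.\ $\Phi$ a Blaschke product, or $\Phi$ hitting $\T$ only on a small arc. Here the natural strategy is an \emph{inductive construction} (as the authors announce): build $f$ by a $\sum$ of polynomial blocks so that $f$ is Abel universal along $\rho$, while arranging at the same time that on some fixed proper arc $K_0\subset\T$ the dilates $f(r_n\Phi(\zeta))$ are forced to converge (or to be uniformly bounded, or to omit a fixed disc) — contradicting density of $\{(f\circ\Phi)_{r_n}|_{K_0}\}$ in $C(K_0)$.

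The mechanism I would use to obstruct $f\circ\Phi$ is the following. Since $\Phi$ is not a rotation, by the maximum principle the set $E=\{\zeta\in\T:|\Phi(\zeta)|=1\}$ is a \emph{proper} closed subset of $\T$ (if $|\Phi|\equiv1$ on $\T$ then $\Phi$ is a finite Blaschke product; a degree-one Blaschke product that is continuous with $\Phi(\D)\subset\D$ and $|\Phi|=1$ on all of $\T$ is a disc automorphism, and a genuine automorphism that is not a rotation moves $0$, which one handles separately — see below). Pick a closed arc $K_0\subset\T$ contained in the interior of $\T\setminus E$ together with a neighbourhood; then $|\Phi|\leq 1-\delta$ on $K_0$ for some $\delta>0$, so $\Phi(r_n\zeta)$ for $\zeta\in K_0$ stays in $\overline{\D(0,1-\delta)}$ for all large $n$, and $f(r_n\Phi(\zeta))$ ranges in the compact set $f(\overline{\D(0,1-\delta)})$ — again a uniformly bounded family, never dense in $C(K_0)$. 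So in fact for \emph{every} $f\in H(\D)$, $f\circ\Phi\notin\UU_A(\D)$, and the inductive construction is not even needed once this geometric observation is in place. The only genuinely delicate point left is the degenerate case where $\Phi$ is a disc automorphism $\Phi(z)=e^{i\theta}\frac{z-a}{1-\bar a z}$ with $a\neq 0$: here $|\Phi|=1$ on \emph{all} of $\T$, so the bounded-image trick fails, and one must run the inductive construction: using that $\Phi$ maps arcs of $\T$ to arcs of $\T$ but \emph{non-affinely} (it distorts arc length, contracting near $-a/|a|$ and expanding near $a/|a|$), build $f$ with universal behaviour along $\rho$ on one arc while controlling $f\circ\Phi$ on the image arc so that the radii $r_n\Phi(\zeta)$ — which are \emph{not} of the form $r_{n}(\text{rotation of }\zeta)$, since $\Phi$ bends the arc — force, say, $f(r_n\Phi(\zeta))\to\infty$ uniformly on a sub-arc, contradicting universality there.

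The main obstacle is precisely this automorphism case: one must exhibit a single $f\in\UU_A(\D,\rho)$ whose Abel universality is genuinely incompatible with that of $f\circ\Phi$, and the construction must simultaneously (a) realize countably many approximation demands defining $\UU_A(\D,\rho)$ and (b) plant an obstruction seen by $f\circ\Phi$ on a fixed arc. The standard device is to alternate: at odd stages add a polynomial block approximating the next required target on the relevant compact set for $f$ itself (using Mergelyan/Runge and the fact that at a suitable radius the block is tiny on previously-fixed compacta), and at even stages add a block that is large on the $\Phi$-image of a fixed arc at the scale $r_n$, exploiting that $\Phi$ is not a rotation so these two radial ``slots'' can be kept disjoint. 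I would model the bookkeeping on the original construction in \cite{Charpentier2020}, and I expect the crucial quantitative input to be a lower bound, uniform in $n$, on how far $\{r_n\Phi(\zeta):\zeta\in K_0\}$ lies from $\{r_n e^{it}:t\in J\}$ for the arcs being used — i.e.\ that the conformal distortion of $\Phi$ genuinely separates the two families of radii.
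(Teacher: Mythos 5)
Your easy direction and your treatment of the cases where $|\Phi|<1$ somewhere on $\T$ are fine (modulo the slip that the dilate of $f\circ\Phi$ at radius $r$ is $\zeta\mapsto f(\Phi(r\zeta))$, not $f(r\Phi(\zeta))$; the boundedness argument survives this). But there are two genuine gaps. First, your assertion that $E=\{\zeta\in\T:|\Phi(\zeta)|=1\}$ is a \emph{proper} subset of $\T$ whenever $\Phi$ is not a rotation is false: any finite Blaschke product of degree $\ell\geq 2$ (e.g.\ $\Phi(z)=z^2$) has $E=\T$, is not an automorphism, and is not covered by your parenthetical, which only disposes of the degree-one case. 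The bounded-image trick fails there and this case genuinely needs an argument; the paper (Proposition \ref{prop-Blaschke-to-autom}) handles it by observing that $\Phi(C(0,r))$ is a closed curve winding $\ell\geq 2$ times around $0$, so that $\C\setminus\Phi(rI)$, for a suitable proper arc $I$, has a bounded component containing any prescribed compact subset of $\D$; boundedness of $f\circ\Phi$ on some sequence $r_nI$ then forces $f$ to be bounded on $\D$ by the maximum modulus principle, contradicting $f\in\UU_A(\D,\rho)$.

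Second, in the automorphism case your obstruction is planted only along the radii $r_n$ of the fixed sequence $\rho$; that at best yields $f\circ\Phi\notin\UU_A(\D,\rho)$, whereas statement (2) of the corollary requires producing $f\in\UU_A(\D)$ with $f\circ\Phi\notin\UU_A(\D)=\bigcup_{\rho'}\UU_A(\D,\rho')$, i.e.\ failure of approximation along \emph{every} sequence of radii. Moreover, the mechanism you propose (arc-length distortion of $\Phi$ on $\T$, the distance between $\{r_n\Phi(\zeta)\}$ and $\{r_ne^{it}\}$) is not the operative one: since $(f\circ\Phi)(r\zeta)=f(\Phi(r\zeta))$, what matters is that the radial segments $[0,1)\zeta$ are mapped to curves that cross the circles $C(0,s)$ at different ``times'' for different $\zeta$. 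The paper's Lemma \ref{lemme-conf-not-inv} extracts two points $\zeta_1,\zeta_2$ with $|\Phi(r\zeta_1)|\neq|\Phi(r\zeta_2)|$ for all $r$, together with eventual monotonicity of $r\mapsto|\Phi(r\zeta_i)|$, and the inductive construction keeps $f$ of modulus at most $\sum_n\e_n$ on all of $\Phi([r_{-1},1]\zeta_1)\cup\Phi([r_{-1},1]\zeta_2)$ except on tiny interleaved portions, so that $\min\left(|f(\Phi(r\zeta_1))|,|f(\Phi(r\zeta_2))|\right)\leq\sum_n\e_n$ for \emph{every} $r$; consequently $f\circ\Phi$ cannot approximate a large constant on the two-point compact set $\{\zeta_1,\zeta_2\}$ along any radii. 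Without these two ingredients --- an obstruction uniform in $r$, and the correct two-point radial separation --- your sketch does not close.
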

	
	The proof of Theorem \ref{main-thm-invariance-right} will be divided into two steps. The first one consists in reducing the problem, using the following proposition.
	
	\begin{prop}\label{prop-Blaschke-to-autom}Let $\Phi:\D\to \D$ be holomorphic and continuous on $\overline{\D}$ and $f\in \UU_A(\D,\rho)$. If $f\circ \Phi \in \UU_A(\D)$, then $\Phi$ is an automorphism.
	\end{prop}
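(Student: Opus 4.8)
The plan is to argue by contraposition: assuming $\Phi:\D\to\D$ is holomorphic, continuous on $\overline{\D}$, but \emph{not} an automorphism of $\D$, I will exhibit some $f\in\UU_A(\D,\rho)$ for which $f\circ\Phi\notin\UU_A(\D)$. The key structural fact to exploit is that a non-automorphism $\Phi$ which is continuous on $\overline{\D}$ must fail to be a bijection of $\overline{\D}$ onto $\overline{\D}$ in a way visible on the boundary; more precisely, either $\|\Phi\|_\infty<1$ (so $\Phi(\overline\D)$ is a compact subset of $\D$), or $\Phi(\overline{\D})$ meets $\T$ but $\Phi$ restricted to $\T$ is not injective, or $\Phi(\T)$ omits an open subarc of $\T$. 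In each case the image $\Phi(\overline{\D})\cap \T$ is a \emph{proper} closed subset of $\T$ whose structure is incompatible with the radial density property that $f\circ\Phi$ would need to have.

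First I would handle the easy case $\|\Phi\|_\infty=:s<1$. Then for every $0\le r<1$ we have $(f\circ\Phi)_r(z)=f(r\Phi(z))$, and $r\Phi(z)$ ranges over a compact subset of the disc of radius $s<1$; hence the dilates $(f\circ\Phi)_r$ are all restrictions to $\T$ of functions holomorphic on a fixed neighbourhood of $\overline{\D}$, and their closure in $C(K)$ (for any $K$) lies in a proper closed subspace — in particular it cannot be all of $C(K)$ for, say, $K$ a nondegenerate subarc. So $f\circ\Phi\notin\UU_A(\D)$ for \emph{every} $f$, which is even stronger than needed. The substantive case is $\|\Phi\|_\infty=1$. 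Here I would pass to boundary values: since $\Phi$ is continuous on $\overline{\D}$ and bounded by $1$, the set $E:=\{\zeta\in\T:|\Phi(\zeta)|=1\}$ is closed, and $\Phi(\T)\cap\T=\Phi(E)\subsetneq\T$ (a proper subset, since $\Phi$ is not an automorphism — if $\Phi(\T)\supseteq\T$ with $\|\Phi\|_\infty=1$ one shows $\Phi$ must be a finite Blaschke product, and then Proposition \ref{prop-Blaschke-to-autom}'s conclusion would have to be obtained from the companion argument that a non-automorphism finite Blaschke product also fails — this is presumably the content of the surrounding section, so here I only need: $\Phi$ not an automorphism $\Rightarrow$ the relevant boundary image is "deficient").

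The core obstruction argument I envisage: fix a small arc $I\subset\T$ and a point $\zeta_0\in I$, and choose $f\in\UU_A(\D,\rho)$ (any one — e.g. a generic element of the residual set $\UU_A(\D,\rho)$) whose \emph{radial} behaviour we then track through $\Phi$. For $f\circ\Phi$ to be Abel universal along some subsequence of $\rho$ we would need, on a compact $K\subset\T$, that $f(r_{n_k}\Phi(\zeta))\to\varphi(\zeta)$ uniformly for $\zeta\in K$, for arbitrary $\varphi\in C(K)$. If $\Phi$ is not injective on $K$ — which we can arrange by choosing $K$ inside the region where $\Phi$ identifies two boundary points, using that a continuous non-automorphism on $\overline{\D}$ cannot be injective on all of $\T$ unless it is an automorphism (degree/argument-principle argument) — then $f(r_{n_k}\Phi(\cdot))$ takes equal values at the identified points for \emph{every} $k$, so it can only approximate $\varphi\in C(K)$ that respect this identification; picking $\varphi$ that separates the identified points gives a contradiction. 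The remaining sub-case, where $\Phi|_\T$ happens to be injective but $\Phi(\T)$ still omits an arc, is handled by noting $\Phi(\overline\D)$ then avoids a full neighbourhood $D(\zeta_1,\delta)\cap\overline\D$ of some $\zeta_1\in\T$ inside $\overline\D$, so $r\Phi(z)$ stays in a compact subset of $\D\cup(\T\setminus\{\zeta_1\})$; one then uses that $f$ can be chosen so that its dilates behave wildly precisely near the radius $\zeta_1$ while $f\circ\Phi$ never "sees" that direction, contradicting the required density on a $K$ containing $\zeta_1$ — or, more cleanly, one reduces to the first case by a conformal change of variable.

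The main obstacle, and the delicate point to get right, is the \textbf{non-injectivity on the boundary} claim and its quantitative use: I must produce an \emph{explicit} compact arc $K\subset\T$ together with two points $a,b\in K$ with $\Phi(a)=\Phi(b)$ (or a limiting version thereof) robust enough that the identification persists for all dilates, and I must make sure the $f$ I feed in genuinely forces a separating target $\varphi$ — i.e. that $C(K)$ density for $f\circ\Phi$ would really demand approximating a $\varphi$ with $\varphi(a)\ne\varphi(b)$. This requires care because $K$ is a \emph{subset} of $\T$ chosen by us, not given; the argument must show that \emph{some} legitimate choice of $K$ (proper compact subset of $\T$) detects the failure. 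I expect the cleanest route is: reduce via the three cases above to the situation "$\Phi$ is a finite Blaschke product of degree $\ge 2$ or $\Phi(\overline\D)$ is compactly contained", treat the latter directly as in the first paragraph, and for the former invoke the two-valued obstruction on a small arc where a degree-$\ge 2$ Blaschke product is genuinely $2$-to-$1$ on $\T$ — a standard fact — which supplies the required pair $a,b$.
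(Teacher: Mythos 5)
The central step of your argument---the ``two-valued obstruction'' for a finite Blaschke product of degree $\geq 2$---rests on a miscomputation of the dilates of $f\circ \Phi$. Throughout you work with $f(r\Phi(\zeta))$, but by definition $(f\circ \Phi)_r(\zeta)=(f\circ \Phi)(r\zeta)=f(\Phi(r\zeta))$: dilation does not commute with composition on the right. Consequently, even if $\Phi(a)=\Phi(b)$ for two boundary points $a\neq b$, the points $\Phi(ra)$ and $\Phi(rb)$ are in general distinct for every $r<1$ (they only merge in the limit $r\to 1$, at the common boundary value $w=\Phi(a)\in\T$), and since any $f\in\UU_A(\D,\rho)$ oscillates wildly near every point of $\T$, nothing forces $f(\Phi(ra))$ and $f(\Phi(rb))$ to be close. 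So the dilates of $f\circ\Phi$ do \emph{not} respect the identification $a\sim b$, and your separating test function $\varphi$ yields no contradiction. (Your first case, and more generally the case where $|\Phi(\zeta_0)|<1$ for some $\zeta_0\in\T$, survives the same miscomputation, because there $\Phi(r\zeta_0)$, $r\in[0,1]$, stays in a compact subset of $\D$ on which $f$ is bounded; this is essentially how the paper reduces to the case where $\Phi$ is a finite Blaschke product. But the Blaschke case is the heart of the proposition, and that is exactly where your argument breaks.)

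Two further points. First, the logical target: the proposition quantifies over all $f\in\UU_A(\D,\rho)$, so its contrapositive requires showing that a non-automorphism $\Phi$ destroys universality of $f\circ\Phi$ for \emph{every} $f\in\UU_A(\D,\rho)$; your stated plan to ``exhibit some $f$'', and remarks such as ``$f$ can be chosen so that its dilates behave wildly near $\zeta_1$'', aim at a weaker statement (which is rather the content of Theorem \ref{thm-noninvariance}). Second, for comparison, the paper's treatment of the Blaschke case is genuinely global: for a degree-$l$ Blaschke product with $l\geq 2$, the curve $\Phi(C(0,r))$ winds $l$ times around $0$ and self-intersects, so one can choose a proper compact arc $I\subset\T$ such that, for $r$ close to $1$, the arc $\Phi(rI)$ closes up into a loop whose bounded complementary component $L_r\subset\D$ contains any prescribed compact subset of $\D$. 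If some dilate $(f\circ\Phi)_{r_n}$ approximates $0$ on $I$, the maximum modulus principle forces $|f|\leq 1$ on $L_{r_n}$, and letting $n\to\infty$ shows $f$ is bounded on $\D$, contradicting $f\in\UU_A(\D,\rho)$. Some such maximum-principle argument, rather than a pointwise identification on $\T$, appears to be what the statement actually requires.
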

	
	\begin{proof}Let $\Phi$ and $f$ be as in the statement. Let us recall that given any path $\gamma:[0,1) \to \D$ such that $\gamma(r)\to \zeta$ as $r\to 1$ for some $\zeta \in \T$, the set $f(\gamma([0,1))$ is dense in $\C$ (see \cite{Charpentier2020}). Therefore, the continuity of $\Phi$ on $\overline{\D}$ implies that, if $f\circ \Phi \in \UU_A(\D,\rho)$, then $|\Phi(\gamma(r))|\to 1$ as $r\to 1$ for any path $\gamma$ as above. Now, a standard argument shows that a function in $H(\D)$ that enjoys the latter property must be a finite Blaschke product.
		
		Let us assume that $\Phi$ is a finite Blaschke product that is not an automorphism. It is then $l$-valent on $\overline{\D}$ for some integer $l\geq 2$. We claim that for any compact set $L$ in $\D$, there exists $r_0\in [0,1)$ and a compact arc $I$ in $\T$, different from $\T$, such that for any $r\in [r_0,1)$, $\C\setminus \Phi(rI)$ has a bounded connected component $L_r$, contained in $\D$, and such that $L\subset L_r$. Indeed, by continuity of $\Phi$ on $\overline{\D}$ and since $\Phi(\T)=\T$, the set $\Phi(C(0,r))$ is a Jordan curve with at least one point of self-intersection and such that the index of $0$ with respect to it is equal to $l$, that is also the index of $0$ with respect to $\Phi(\T)$. Now the claim follows from the fact that $l\geq 2$ and that $\Phi$ is injective in some neighbourhood of any point of $\T$ (since $\Phi'(\zeta)\neq 0$ for any $\zeta \in \T$).
		
		To conclude, if we assume that $f\circ \Phi$ belongs to $\UU_A(\D)$, then there exists an increasing sequence $(r_n)_n \in [0,1)$, tending to $1$, such that $|f\circ \Phi(r_n\zeta)|\leq 1$ for any $\zeta \in I$. By the maximum modulus principle, this implies that, for any $n$ large enough, $f$ is bounded by $1$ on the set $L_{r_n}$ given by the claim. Since, $L_{r_n}$ contains any compact subset of $\D$ for $n$ large enough, it follows that $f$ is bounded in $\D$, which contradicts the fact that $f$ is in $\UU_A(\D,\rho)$.
	\end{proof}

	By the previous proposition, in order to prove Theorem \ref{main-thm-invariance-right}, we are reduced to proving the following one.
	
	\begin{theorem}\label{thm-noninvariance}Let $\Phi$ be an automorphism of $\D$. Then, the function $f\circ \Phi$ belongs to $\UU_A(\D)$ for any $f$ in $\UU_A(\D,\rho)$, if and only if $\Phi$ is a rotation.
	\end{theorem}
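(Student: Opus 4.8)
First I would settle the easy implication: if $\Phi(z)=e^{i\theta}z$ then $(f\circ\Phi)_r(\zeta)=f_r(e^{i\theta}\zeta)$, so applying the universality of $f$ on the proper compact set $e^{i\theta}K$ to the function $\zeta\mapsto\vp(e^{-i\theta}\zeta)$ gives $f\circ\Phi\in\UU_A(\D,\rho)$; in particular $\UU_A(\D,\rho)$ and $\UU_A(\D)$ are invariant under composition with a rotation on either side. For the converse, let $\Phi$ be an automorphism which is not a rotation; write $\Phi=R_1\circ\Phi_t\circ R_2$ with $R_1,R_2$ rotations and $\Phi_t(z)=(t-z)/(1-tz)$ for some $t\in(0,1)$. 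By the invariance just noted, it suffices to construct $g\in\UU_A(\D,\rho)$ with $g\circ\Phi_t\notin\UU_A(\D)$, and then take $f=g\circ R_1^{-1}$. I will also use the following reformulation, available because $\UU_A(\D)=\bigcup_\rho\UU_A(\D,\rho)$: a function $h\in H(\D)$ is \emph{not} Abel universal if and only if there exist a proper compact subarc $K$ of $\T$, a function $\vp\in C(K)$, a radius $r_0<1$ and $\delta>0$ with $\sup_{\zeta\in K}\abs{h(r\zeta)-\vp(\zeta)}\ge\delta$ for every $r\in[r_0,1)$ (the nontrivial direction being a routine diagonal construction of a $\rho$). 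The goal is to produce such an obstruction for $h=g\circ\Phi_t$ with $\vp\equiv0$.

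\textbf{The geometry of $\Phi_t$ near $\T$.} From the identity $1-\abs{\Phi_t(w)}^2=(1-t^2)(1-\abs w^2)/\abs{1-tw}^2$ one gets, uniformly on compact subarcs of $\T$,
\[
\frac{1-\abs{\Phi_t(r\zeta)}}{1-r}\ \xrightarrow[r\to1^-]{}\ \lambda(\zeta):=\frac{1-t^2}{\abs{1-t\zeta}^2}=\abs{\Phi_t'(\zeta)}.
\]
I would fix a compact proper subarc $K^*$ of $\T$ which is not a singleton. Since $t\neq0$, the function $\lambda$ is non-constant on $K^*$, so $\lambda_-:=\min_{K^*}\lambda<\lambda_+:=\max_{K^*}\lambda$; fix $q\in(\lambda_-/\lambda_+,1)$, set $u_n:=1-cq^n$ for a small $c>0$ (perturbing the $u_n$ slightly so they avoid the terms of $\rho$), and fix a compact proper subarc $\widetilde K$ of $\T$ with $\Phi_t(K^*)$ in its interior. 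The point of these calibrated choices is that, for $r$ close enough to $1$, the continuous function $\zeta\mapsto1-\abs{\Phi_t(r\zeta)}$ maps $K^*$ onto an interval $[m_r,M_r]$ with $m_r\to0$ and $M_r/m_r>1/q$; by an elementary property of geometric sequences there are then $n(r)\to\infty$ and $\zeta_r\in K^*$ with $\abs{\Phi_t(r\zeta_r)}=u_{n(r)}$, and since $\Phi_t(r\zeta)\to\Phi_t(\zeta)$ uniformly on $K^*$ the argument of $\Phi_t(r\zeta_r)$ is close enough to that of $\Phi_t(\zeta_r)\in\Phi_t(K^*)$ that $\Phi_t(r\zeta_r)$ lies on the arc $u_{n(r)}\widetilde K:=\{u_{n(r)}\eta:\eta\in\widetilde K\}$.

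\textbf{The construction and the conclusion.} I would build $g=\lim_s g_s$ with $g_s$ polynomials, by the standard inductive scheme for Abel universal functions: at step $s$ one adds a polynomial which is $<2^{-s}$ in modulus on the closed disc of radius $R_{s-1}$ (the largest radius used so far), so that $g\in H(\D)$ and all earlier approximations persist. Two kinds of tasks are interleaved, each of the form ``prescribe $g$ within $\e$ on a proper compact arc contained in an origin-centred circle of a new, larger radius'' and handled at the increment step by Mergelyan's theorem: \emph{(U)} universality tasks, performed at the radii of $\rho$ in increasing order and cycling through a suitable countable family of pairs (proper arc, continuous function) that forces $g\in\UU_A(\D,\rho)$; and \emph{(L)} for each $n$ with $u_n$ beyond the starting radius, a task at radius $u_n$ forcing $\abs g\ge1$ on $u_n\widetilde K$. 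As the (U)- and (L)-radii can be kept disjoint and everything is processed at increasing radii, no task disturbs another, so the limit $g$ belongs to $\UU_A(\D,\rho)$ and satisfies $\abs g\ge1$ on $u_n\widetilde K$ for all large $n$. Combining with the previous step, for every $r$ close to $1$ we get $\abs{g(\Phi_t(r\zeta_r))}\ge1$, hence $\sup_{\zeta\in K^*}\abs{(g\circ\Phi_t)(r\zeta)}\ge1$; by the reformulation (with $K=K^*$, $\vp\equiv0$, $\delta=1$) this yields $g\circ\Phi_t\notin\UU_A(\D)$, which completes the proof.

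\textbf{Expected main obstacle.} The technical heart is twofold. First, one must upgrade the first-order asymptotics $1-\abs{\Phi_t(r\zeta)}\sim\lambda(\zeta)(1-r)$ to the \emph{uniform} statement that every ``tilted'' image arc $\Phi_t(rK^*)$ genuinely meets one of the circles $\abs z=u_n$ along $\widetilde K$; this is exactly where $t\neq0$ (i.e. the non-constancy of $\lambda$) is essential, and where the calibration $q>\lambda_-/\lambda_+$ is forced. Second, one must check inside the inductive construction that tasks (U) and (L) are genuinely compatible — that forcing $\abs g\ge1$ on the arcs $u_n\widetilde K$ does not prevent $g$ from radially approximating arbitrary functions (in particular $0$) on $\widetilde K$, as Abel universality requires. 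Both should go through because all constraints sit on proper arcs of distinct origin-centred circles and are imposed at strictly increasing radii, so Mergelyan's theorem applies freely at each increment.
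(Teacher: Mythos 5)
Your proposal is correct and reaches the theorem by a genuinely different route from the paper. Both proofs share the same skeleton — reduce to a M\"obius map $\Phi_t$ with $t\neq 0$ and run a Mergelyan-based induction that builds an Abel universal $g$ while planting an obstruction that survives composition with $\Phi_t$ — but your obstruction is dual to the paper's. The paper picks two points $\zeta_1,\zeta_2\in\T$ with $|\Phi(r\zeta_1)|\neq|\Phi(r\zeta_2)|$ for all $r$ (Lemma \ref{lemme-conf-not-inv}) and keeps $f$ \emph{small} along at least one of the two image curves $\Phi([0,1)\zeta_i)$ at every parameter, so that $(f\circ\Phi)_r$ can never approximate a large constant on the two-point set $\{\zeta_1,\zeta_2\}$; this forces a three-case analysis according to whether the universality arc $r_nK_{\beta(n)}$ meets neither, one, or both curves. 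You instead exploit the non-constancy of $\zeta\mapsto|\Phi_t'(\zeta)|$ on an arc $K^*$ to show that every tilted arc $\Phi_t(rK^*)$ must cross one of a calibrated geometric sequence of circles $|z|=u_n$ chosen disjoint from $\rho$, and you make $g$ \emph{large} on the arcs $u_n\widetilde K$, so that $(g\circ\Phi_t)_r$ can never approximate $0$ on $K^*$. Because your control arcs live on circles disjoint from those carrying the universality tasks, the case analysis disappears entirely — that is the main gain of your route. The price is the quantitative boundary asymptotics (the calibration $q>\lambda_-/\lambda_+$ and the covering property of the geometric sequence), where the paper needs only the qualitative inequality of moduli; your covering argument does check out, since if no term $cq^n$ lay in $[m_r,M_r]$ one would get $M_r/m_r<1/q$. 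Do record the small points you gloss over: choose $c$ so that $\{1-cq^n\}_n$ misses the countable set $\rho$ (only countably many $c$ are bad); start the (L)-tasks at some index $n_0$ and take $r_0$ close enough to $1$ that $n(r)\geq n_0$, $M_r/m_r>1/q$, and the radial projection of $\Phi_t(r\zeta_r)$ lands in $\widetilde K$; and note explicitly that your obstruction holds for \emph{all} $r\in[r_0,1)$, which is exactly what excludes $g\circ\Phi_t$ from the whole union $\UU_A(\D)=\bigcup_{\sigma}\UU_A(\D,\sigma)$ and not merely from $\UU_A(\D,\rho)$.
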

	
	For the proof of Theorem \ref{thm-noninvariance} we will use the following lemma.
	
	\begin{lemma}\label{lemme-conf-not-inv}Let $\Phi$ be an automorphism of $\D$.
		\begin{enumerate}
			\item $\Phi$ is \emph{not} a rotation if and only if for any non-empty open arc $I\subset \T$, there exist $\zeta_1$ and $\zeta_2$ in $I$ such that for any $r\in (0,1)$, $|\Phi(r\zeta_1)|\neq |\Phi(r\zeta_2)|$.
			\item For every $\zeta \in \T$, there exists $r_0\in (0,1)$ such that the function $r\mapsto |\Phi(r\zeta)|$ is increasing on $(r_0,1)$.
		\end{enumerate}
	\end{lemma}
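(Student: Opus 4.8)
The plan is to reduce both statements to explicit elementary computations. Since the modulus of an automorphism of $\D$ is unaffected by its unimodular rotation factor, we may and do assume $\Phi(z)=\frac{a-z}{1-\bar a z}$ for some $a\in\D$, so that $\Phi$ is a rotation exactly when $a=0$. Writing $\zeta=e^{it}\in\T$ and $a=|a|e^{i\alpha}$, and putting $s:=|a|^2$ and $u=u(\zeta):=\mathrm{Re}(\bar a\zeta)=|a|\cos(t-\alpha)$, one computes directly
\[
|\Phi(r\zeta)|^2=\frac{s-2ru+r^2}{1-2ru+r^2 s}=:g(r,u),
\]
and the denominator equals $|1-\bar a r\zeta|^2$, hence is positive for every $r\in[0,1]$. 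Both parts of the lemma come from the two partial derivatives of $g$.

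For part (1), I would differentiate $g$ in $u$ with $r$ held fixed and simplify to
\[
\partial_u g(r,u)=\frac{-2r(1-r^2)(1-s)}{\left(1-2ru+r^2 s\right)^2}.
\]
This is strictly negative for every $r\in(0,1)$ whenever $a\neq 0$, so in that case $u\mapsto g(r,u)$ is strictly monotone for each $r\in(0,1)$. It then suffices to pick $\zeta_1,\zeta_2\in I$ with $u(\zeta_1)\neq u(\zeta_2)$, which is always possible since $t\mapsto|a|\cos(t-\alpha)$ is non-constant on any non-degenerate interval; for those points $|\Phi(r\zeta_1)|^2=g(r,u(\zeta_1))\neq g(r,u(\zeta_2))=|\Phi(r\zeta_2)|^2$ for all $r\in(0,1)$. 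Conversely, if $\Phi$ is a rotation then $|\Phi(r\zeta)|=r$ for every $\zeta\in\T$, so the stated property fails for every arc; this establishes the equivalence.

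For part (2), I would instead differentiate $g$ in $r$ with $u$ held fixed, obtaining after simplification
\[
\partial_r g(r,u)=\frac{2(1-s)\bigl(r(1+s)-u(1+r^2)\bigr)}{\left(1-2ru+r^2 s\right)^2}.
\]
Since $1-s>0$, the sign of $\partial_r g$ near $r=1$ is the sign of the bracket, and at $r=1$ the bracket equals $(1+s)-2u\ge(1+|a|^2)-2|a|=(1-|a|)^2>0$, using $u(\zeta)\le|a|$. By continuity the bracket stays positive on some interval $(r_0,1)$ with $r_0\in(0,1)$, hence $g(\cdot,u)$ and therefore $r\mapsto|\Phi(r\zeta)|=\sqrt{g(r,u)}$ is increasing there.

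The only mildly laborious step is carrying out and simplifying the two derivative computations, but these are completely routine; the one point that needs a genuine (if tiny) observation is the positivity of the bracket in $\partial_r g$ at $r=1$, which comes down to the elementary inequalities $1+|a|^2\ge 2|a|\ge 2u(\zeta)$.
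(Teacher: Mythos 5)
Your proof is correct and takes essentially the same route as the paper's: both reduce to the M\"obius transformation $\Phi_a$, observe that $|\Phi_a(r\zeta)|$ depends on $\zeta$ only through $\Re(\bar a\zeta)$ (strictly monotonically when $a\neq 0$, which the paper extracts from the identity $1-|\Phi_a(z)|^2=\frac{(1-|a|^2)(1-|z|^2)}{|1-\bar a z|^2}$ and you extract from $\partial_u g$), and establish part (2) by studying the resulting one-variable function of $r$ near $1$. The computations check out, so no changes are needed.
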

	
	\begin{proof}[Proof of Lemma \ref{lemme-conf-not-inv}] For the first part, we note that if $\Phi$ is a rotation we obviously have $|\Phi(r\zeta_1)|= |\Phi(r\zeta_2)|$ for all $r\in (0,1)$ and $\zeta_1$, $\zeta_2\in\T$ . Let us check the non-trivial direction. Without loss of generality we can suppose that $\Phi$ is a M\"obius transformation, namely $\Phi=\Phi_a$ for some $a\in \D$, where
		\begin{equation}\label{Mobius}
			\Phi_a(z)=\frac{a-z}{1-\bar{a}z}.
		\end{equation}
		Let $r\in (0,1)$ and $I$ an open arc in $\T$. Using the formula
		\begin{equation}\label{eq-lemma-not-inv-conf-1}
			1-|\Phi(z)|^2=\frac{(1-|a|^2)(1-|z|^2)}{|1-\bar{a}z|^2},
		\end{equation}
		we easily get that for any $\zeta_1,\zeta_2\in \T$, the equality $1-|\Phi(r\zeta_1)|^2=1-|\Phi(r\zeta_2)|^2$ is equivalent to
		\begin{equation}\label{eq-lemma-not-inv-conf-2}
			\Re{(\bar{a}\zeta_1)}=\Re{(\bar{a}\zeta_2)},
		\end{equation}
		which is also equivalent to $1-|\Phi(R\zeta_1)|^2=1-|\Phi(R\zeta_2)|^2$ for any $R\in (0,1)$. Now, it is clear that \eqref{eq-lemma-not-inv-conf-2} is true for any $\zeta_1,\zeta_2$ in the non-empty open arc $I$ if and only if $a=0$, i.e. $\Phi$ is a rotation.
		
		The second part can be easily seen by using \eqref{eq-lemma-not-inv-conf-1} and studying the function $r\mapsto \frac{1-r^2}{r^2|a|^2-2r\Re{(\bar{a}\zeta)}+1}$.
	\end{proof}
	
	Now we will use the above lemma to prove Theorem \ref{thm-noninvariance}.
	
	\begin{proof}[Proof of Theorem \ref{thm-noninvariance}]We have already noticed that, if $f$ is any function in $\UU_A(\D,\rho)$ and $\Phi$ is a rotation, then $f\circ \Phi$ belongs to $\UU_A(\D,\rho)$. So it is enough to show that if $\Phi$ is any M\"obius transformation that is not a rotation, then there exists $f\in \UU_A(\D,\rho)$ such that $f\circ \Phi$ does not belong to $\UU_A(\D)$. Let us then fix $\Phi=\Phi_a$ with $a\neq 0$ (see \eqref{Mobius} for the definition of $\Phi_a$). Since $\Phi^{-1}=\Phi$, we shall always denote also by $\Phi$ the function $\Phi^{-1}$. 
		
		By assumption and Lemma \ref{lemme-conf-not-inv}, there exist $\zeta _1\neq \zeta_2$ in $\T$ such that for any $r\in (0,1)$, $|\Phi(r\zeta_1)|\neq |\Phi(r\zeta_2)|$. Let $r_{-1}\in (0,1)$ be such that both functions $r\mapsto |\Phi(r\zeta_1)|$ and $r\mapsto |\Phi(r\zeta_2)|$ are increasing on $(r_{-1},1)$ (using (2) of Lemma \ref{lemme-conf-not-inv}). Note that for any $r,r'\in (r_{-1},1)$, $\Phi(r\zeta_1)\neq \Phi(r'\zeta_2)$, by injectivity of $\Phi$ on $\overline{\D}$. 
		
		Let us write $\rho=(r_n)_n$. Without loss of generality we may and shall assume that $r_n\in (r_{-1},1)$ for any $n\in \N$. By all the previous, together with the continuity of $\Phi$ and the fact that $|\Phi(r\zeta)|\to 1$ as $r\to 1$ for any $\zeta \in \T$, the sets $\cup_n\{R\in (0,1):\,|\Phi(R\zeta_1)|=r_n\}$ and $\cup_n\{R\in (0,1):\,|\Phi(R\zeta_2)|=r_n\}$ are disjoint and can be enumerated as, respectively, two increasing sequences $(R_n^1)_n$ and $(R_n^2)_n$ in $(0,1)$, tending to $1$ as $n\to \infty$. Then we have $|\Phi(R_n^i\zeta_i)|=r_n$ for any $n$ and any $i\in \{1,2\}$. For technical reasons, we will also make use of the two increasing enumerations $(s_n^1)_{n\geq -1}$ and $(s_n^2)_{n\geq -1}$ in $(0,1)$ of the sets $\cup_{n\geq -1}\{s\in (0,1):\,|\Phi(s\zeta_1)|=(r_n+r_{n+1})/2\}$ and $\cup_{n\geq -1}\{s\in (0,1):\,|\Phi(s\zeta_2)|=(r_n+r_{n+1})/2\}$, respectively. Again, the lastly considered four sets are disjoint, and we have $R_n^i<s_n^i<R_{n+1}^i$ for any $n\geq -1$ and any $i\in \{1,2\}$. For $n\geq -1$, we denote $L_n=\overline{D(0,r_{n})}$.
		
		\medskip
		
		Finally, let us fix technical notations that we will used in the construction below:
		\begin{itemize}
			\item $(\varepsilon_n)_n$ is a sequence of positive real numbers decreasing to $0$ with $\sum_{n=1}^{\infty}\varepsilon_n\leq 1/2$;
			\item $(\vp_n)_n$ is an enumeration of the polynomials with coefficients in $\Q +i\Q$;
			\item $(K_n)_n$ is a sequence of compact subsets of $\T$, different from $\T$, such that for any compact set $K\subset \T$ different from $\T$, there exists $n\in \N$ such that $K\subset K_n$;
			\item $\alpha,\beta: \N\to \N$ are two functions such that for any $l,m\in \N$, there exists infinitely many $n\in \N$ for which $(\alpha(n),\beta(n))=(l,m)$.
		\end{itemize}
		
		We now proceed to the construction of the desired function. To do so, we will build by induction a sequence $(P_n)_n$ of polynomials with specific approximation properties. We set $P_0=0$. Assume now that $P_0,\ldots,P_{n-1}$ have been defined. To specify $P_n$ we distinguish three cases:
		
		\smallskip
		
		\textbf{Case I:} if $r_nK_{\beta(n)}\cap \Phi([r_{-1},1)\zeta_i)=\emptyset$ for each $i\in \{1,2\}$.
		
		We choose $P_n$ by applying Mergelyan's theorem in order to have for any $i\in \{1,2\}$,
		\begin{enumerate}[(a)]
			\item \label{casea} $\sup_{z\in L_{n-1} \cup \Phi([r_{-1},1]\zeta_i)}|P_n(z)|\leq \e_n$;
			\item \label{caseb-1} $\sup_{z\in r_nK_{\beta(n)}}|P_n(z)+\left(\sum_{k=0}^{n-1}P_k(z)-\vp_{\alpha(n)}(z)\right)|\leq \varepsilon_n$.
		\end{enumerate}
		
		Note that this choice is possible by assumption and since the set $\cup_{i\in \{1,2\}}\Phi([r_{-1},1]\zeta_i) \cup L_{n-1} \cup r_nK_{\beta(n)}$ is compact and has connected complement.
		
		\smallskip
		
		\textbf{Case II:} if $r_nK_{\beta(n)}\cap \Phi([r_{-1},1)\zeta_i)=\emptyset$ and $r_nK_{\beta(n)}\cap \Phi([r_{-1},1)\zeta_j)\neq\emptyset$ for some $i,j\in \{1,2\}$.
		
		Since both cases $(i,j)=(1,2)$ and $(i,j)=(2,1)$ are treated in the same way, we assume that $r_nK_{\beta(n)}\cap \Phi([r_{-1},1)\zeta_2)=\emptyset$. Let $\psi_n(z)$ be any function continuous on the curve $\Phi([r_{-1},1]\zeta_1)$ that is equal to $0$ on $\Phi([r_{-1},s_{n-1}^1]\zeta_1\cup [s_{n}^1,1]\zeta_1)$ and to $\sum_{k=0}^nP_k(z)-\vp_{\alpha(n)}(z)$ at $z=\Phi(R_n^1\zeta_1)$. Then we define the function
		\begin{equation*}
			\vp_n^{\text{II}}(z)=\left\{\begin{array}{ll}
				\sum_{k=0}^{n-1}P_k(z)-\vp_{\alpha(n)}(z) & \text{if }z\in r_nK_{\beta(n)}\\
				\psi_n(z) & \text{if }z\in \Phi([r_{-1},1]\zeta_1).
			\end{array}\right.
		\end{equation*}
		
		Note that $\vp_n^{\text{II}}(z)$ is continuous on $r_nK_{\beta(n)}\cup \Phi([r_{-1},1]\zeta_1)$ and holomorphic in its interior. Moreover, by assumption, the set $\cup_{i\in \{1,2\}}\Phi([r_{-1},1]\zeta_i) \cup L_{n-1} \cup r_nK_{\beta(n)}$ has connected complement. Then we choose $P_n$ by applying Mergelyan's theorem in order to have
		\begin{enumerate}[(a)]
			\setcounter{enumi}{2}
			\item \label{caseb}$\sup_{z\in L_{n-1} \cup \Phi([r_{-1},1]\zeta_2)}|P_n(z)|\leq \e_n$;
			\item \label{cased}$\sup_{z\in r_nK_{\beta(n)}\cup \Phi([r_{-1},1]\zeta_1)}|P_n(z)+ \vp_{n}^{\text{II}}|\leq \varepsilon_n$.
		\end{enumerate}
		
		\smallskip
		
		\textbf{Case III:} if $r_nK_{\beta(n)}\cap \Phi([r_{-1},1)\zeta_i)\neq \emptyset$ for each $i\in \{1,2\}$. There are two subcases that correspond to $R_n^1<R_n^2$ and $R_n^1>R_n^2$. Since both are treated similarly, we only consider the case where $R_n^1<R_n^2$, i.e. $|\Phi(R_n^2\zeta_1)|>r_n$ and $|\Phi(R_n^1\zeta_2)|<r_n$. By continuity, there exists $\eta_n>0$ such that the following inequalities hold:
		\begin{multline*}
			\max\left(|\Phi(s_{n-1}^2\zeta_2)|,|\Phi((R_n^1+\eta_n)\zeta_2)|\right) < |\Phi((R_n^2-\eta_n)\zeta_2)| < r_n \\ < |\Phi((R_n^1+\eta_n)\zeta_1)| < \min\left(|\Phi(s_{n}^1\zeta_1)|,|\Phi((R_n^2-\eta_n)\zeta_1)|\right)
		\end{multline*}
		and
		\[
		|\Phi((R_n^1-\eta_n)\zeta_1|\geq |\Phi(s_{n-1}^1\zeta_1)|\quad \text{and} \quad |\Phi((R_n^2+\eta_n)\zeta_2)|\leq |\Phi(s_n^2\zeta_2)|.
		\]
		Now consider any function $\kappa_n$ continuous on the disjoint union of the two curves $\Phi([r_{-1},1]\zeta_1)$ and $\Phi([r_{-1},1]\zeta_2)$ such that
		\begin{itemize}
			\item $\kappa_n(z)=0$ if $z\in \Phi([r_{-1},(R_n^2-\eta_n)]\zeta_2)\cup \Phi((R_n^2+\eta_n),1]\zeta _2)$ and if $z\in \Phi([r_{-1},(R_n^1-\eta_n)]\zeta_1)\cup \Phi((R_n^1+\eta_n),1]\zeta _1)$;
			\item $\kappa_n(z)$ is equal to $\sum_{k=0}^{n-1}P_k(z)-\vp_{\alpha(n)}(z)$ at $z=\Phi(R_n^1\zeta_1)$ and $z=\Phi(R_n^2\zeta_2)$.
		\end{itemize}
		Finally, we define the function
		\begin{equation*}
			\vp_n^{\text{III}}(z)=\left\{\begin{array}{ll}
				\sum_{k=0}^{n-1}P_k(z)-\vp_{\alpha(n)}(z) & \text{if }z\in r_nK_{\beta(n)}\\
				\kappa_n(z) & \text{if }z\in \Phi([r_{-1},1]\zeta_1)\cup \Phi([r_{-1},1]\zeta_2).
			\end{array}\right.
		\end{equation*}
		
		By definition $\vp_n^{\text{III}}(z)$ is continuous on the compact set $r_nK_{\beta(n)}\cup \Phi([r_{-1},1]\zeta_1)\cup \Phi([r_{-1},1]\zeta_2)$ and holomorphic in its interior, hence also on the smaller compact set
		\[
		G_n:=r_nK_{\beta(n)}\cup \Phi([s_{n-1}^1,1]\zeta_1)\cup \Phi([r_{-1},1]\zeta_2).
		\]
		Note that the $L_{n-1}\cup G_n$ is compact and has connected complement, since $\Phi(s_{n-1}^1\zeta_1)>r_{n-1}$ by definition. Thus we can choose $P_n$ by applying Mergelyan's theorem in such a way that
		\begin{enumerate}[(a)]
			\setcounter{enumi}{4}
			\item \label{casec} $\sup_{z\in L_{n-1}}|P_n(z)|\leq \e_n$;
			\item \label{casef}$\sup_{z\in G_n}|P_n(z)+\vp_{n}^{\text{III}}(z)|\leq \varepsilon_n$.
		\end{enumerate}
		
		This concludes the induction. We set $f=\sum _n P_n$ which defines a function in $H(\D)$ (by (\hyperref[casea]{a}), (\hyperref[caseb]{c}) and (\hyperref[casec]{e})). Let us first check that $f\in \UU_A(\D,\rho)$. Fix $n,m\in \N$ and let $(n_l)_l\subset \N$ be such that for any $l\in \N$, $\alpha(n_l)=n$ and $\beta(n_l)=m$. By (\hyperref[caseb-1]{b}), (\hyperref[cased]{d}) or (\hyperref[casef]{f}) we have, for any $z \in K_m$,
		\begin{equation}\label{eq-Abel-univ-Sn-infty}
			\left|f(r_{n_l}z)-\vp_n(z)\right| \leq \left|\sum _{n=0}^{n_l}P_n(z) -\vp_n(z)\right| + \sum_{n>n_l} |P_n(z)| \leq \e_{n_l} + \sum_{n>n_l}\e_{n} \to 0
		\end{equation}
		as $l\to \infty$.
		To see that $f\circ \Phi$ does not belong to $\UU_A(\D)$, it is enough to see that the construction yields, for any $r\in [r_{-1},1)$,
		\[
		\min\left(|f\circ \Phi(r\zeta_1)|,|f\circ \Phi(r\zeta_2)|\right) \leq \sum_n \e_n.
		\]
		In particular, it is not possible to uniformly approximate on $\{\zeta_1,\zeta_2\}$ any constant greater than $\sum_n \e_n$ by some dilates $(f\circ \Phi)_r$, $r\in [r_{-1},1)$, of $f\circ \Phi$.

	\end{proof}
	
	\begin{remark}{\rm If $\Phi$ is not continuous on $\overline{\D}$, it is not clear what necessary and sufficient condition $\Phi$ should satisfy for the conclusion of the previous corollary to hold.}
	\end{remark}
	
	\section{Further extensions and applications}\label{appl}
	
	In \cite{Charpentier2020}, a notion slightly more general than that of Abel universal functions was introduced. As usual, we denote by $\rho=(r_n)_n$ an increasing sequence in $[0,1)$ that converges to $1$ as $n\to \infty$.
	
	\begin{definition}{\rm Let $\rho$ and $w\in \D$ be fixed. We will say that $f\in H(\D)$ belongs to the class $\UU_A^w(\D,\rho)$ of \textit{$w$-Abel universal functions} if, for any proper compact subset $K$ of $\T$, and any function $\vp \in C(K)$,
			\[
			\sup_{\zeta \in K}\left|f(w+r_n(\zeta -w))-\vp(\zeta)\right|\to 0\quad \text{as }n\to \infty.
			\]
		}
	\end{definition}
	We set
	\[
	\mathcal{U}_{A}^w(\D)=\bigcup_{\rho}\UU_A^w(\D,\rho),
	\]
	where $\rho$ varies over the set of all increasing sequences in $[0,1)$ tending to $1$.
	
	Note that if $w=0$, then $\UU_A^w(\D,\rho)$ coincides with $\UU_A(\D,\rho)$, and $\mathcal{U}_{A}^w(\D)$ with $\UU_A(\D)$. Roughly speaking, the definition of a $w$-Abel universal function is that of an Abel universal function for which the \textit{origin} $0$ of the usual radii is moved to another point $w$ in $\D$. It was proven in \cite{Charpentier2020} that the set
	\[
	\bigcap_{w\in \D}\UU_A(\D,\rho,w)
	\]
	is a residual subset of $H(\D)$.
	
	Since $\UU_A(\D)$ does not coincide with any of the sets $\UU_A(\D,\rho)$ (see \cite{CharpentierMouze2022}), and in view of the result of the previous parts, it is natural to ask the following:
	\begin{questionsss}\label{Questions-1-and-2}$\quad$
		\begin{enumerate}
			\item \label{quest-1-fin}Do the classes $\UU_A^w(\D,\rho)$ depend on the choice of the ``origin'' $w$?
			\item \label{quest-2-fin}How do Theorems \ref{left_preserve} and \ref{main-thm-invariance-right} extend to the classes $\UU_A^w(\D,\rho)$ and $\UU_A^w(\D)$ for $w\in \D$ different from $0$.
		\end{enumerate}
	\end{questionsss}
	
	First of all, we underline that, up to superfluous modifications that are left to the reader, the proof of Theorem \ref{left_preserve} works for $\UU_A^w(\D)$, for any $w\in \D$. Thus we have:
	\begin{theorem}\label{left_preserve_w}
		Let $f\in\UU_A^w(\D,\rho)$. For any non-constant holomorphic function $g:f(\D)\to\C$, the function $g\circ f$ belongs to $\UU_A^w(\D,\rho)$.
	\end{theorem}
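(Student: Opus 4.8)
The plan is to rerun the proof of Theorem \ref{left_preserve} with the ordinary dilates $f_{r_n}(\zeta)=f(r_n\zeta)$ replaced everywhere by the $w$-dilates $\zeta\mapsto f(w+r_n(\zeta-w))$; essentially nothing in that argument is sensitive to where the ``origin'' of the radii sits. Let me list the three ingredients and indicate where (minor) care is needed.

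\emph{Step 1: the structural fact $f(\D)\in\{\C,\ \C\setminus\{\alpha\}\}$.} The proof of Theorem \ref{iversen} requires the target of $g$ to be the complement of a countable closed set, so first I would record the $w$-analogue of \cite[Corollary 3.6]{CharpManoMaroI}: for $f\in\UU_A^w(\D,\rho)$ the image $f(\D)$ is either $\C$ or $\C\setminus\{\alpha\}$ for some $\alpha\in\C$. The argument in \cite{CharpManoMaroI} only uses that $f$ has dense image along some path in $\D$ tending to a boundary point. For $f\in\UU_A^w(\D,\rho)$ this holds along the radial-type path $r\mapsto w+r(\zeta_0-w)$, $r\in[0,1)$, where $\zeta_0\in\T$ is picked in the relative interior of a proper closed subarc $K$ of $\T$: universality on $K$ forces $\{f(w+r(\zeta_0-w)):0\le r<1\}$ to be dense in $\C$. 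Hence that proof carries over unchanged, and Theorem \ref{iversen} applies to $g:f(\D)\to\C$.

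\emph{Step 2: the Iversen-type lifting.} This step does not involve $w$ at all: applying Theorem \ref{iversen} to $g$ gives, for every proper compact subarc $K$ of $\T$, a dense subset $D(K)$ of $C(K)$ such that each $h\in D(K)$ admits a continuous lift $h_0\in C(K)$ with $g\circ h_0=h$.

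\emph{Step 3: transfer through universality of $f$.} Fix a proper compact $K\subset\T$, take $h\in D(K)$ and $h_0\in C(K)$ with $h=g\circ h_0$. Since $f\in\UU_A^w(\D,\rho)$, there is an increasing sequence $(\lambda_n)_n$ of integers with $f(w+r_{\lambda_n}(\zeta-w))\to h_0(\zeta)$ uniformly for $\zeta\in K$. By compactness choose an open $W$ with $h_0(K)\subset W\subset\overline{W}\subset f(\D)$; then $g$ is uniformly continuous on $W$, $g\circ f$ is well defined on a neighbourhood of each point $w+r_{\lambda_n}(\zeta-w)$ for $n$ large, and $g(f(w+r_{\lambda_n}(\zeta-w)))\to g(h_0(\zeta))=h(\zeta)$ uniformly on $K$. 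Since $(g\circ f)(w+r_{\lambda_n}(\zeta-w))=g(f(w+r_{\lambda_n}(\zeta-w)))$ and $D(K)$ is dense in $C(K)$, the $w$-dilates of $g\circ f$ are dense in $C(K)$ for every proper compact $K$, i.e. $g\circ f\in\UU_A^w(\D,\rho)$. Taking unions over $\rho$ gives the same statement for $\UU_A^w(\D)$.

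There is no real obstacle here: the only step that is not a literal substitution is Step 1, and even there the work is routine — one just observes that $w$-Abel universal functions are dense along the radial paths emanating from $w$, so the Picard-type argument of \cite{CharpManoMaroI} that bounds the omitted set of $f$ by a single point goes through verbatim. Everything after that is mechanical bookkeeping, which is why the details can reasonably be left to the reader.
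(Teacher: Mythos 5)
Your proposal is correct and takes exactly the route the paper intends: the paper gives no separate proof of Theorem \ref{left_preserve_w}, stating only that the proof of Theorem \ref{left_preserve} carries over ``up to superfluous modifications left to the reader,'' and your three steps are precisely those modifications, including the one genuinely non-mechanical point (the $w$-analogue of \cite[Corollary 3.6]{CharpManoMaroI} guaranteeing $f(\D)$ omits at most one point, so that Theorem \ref{iversen} applies).
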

	
	For Question \ref{quest-1-fin} and the second part of Question \ref{quest-2-fin}, we shall see that the main ideas of the proof of Theorem \ref{main-thm-invariance-right} are general enough to give complete answers.
	
	\medskip
	
	Let $\Psi$ be a homeomorphism from $\overline{\D}$ onto $\overline{\D}$ and let $(P_{\Psi})$ be the property that is satisfied by a function $f$ in $H(\D)$ whenever, for every proper compact set $K\subset \T$, the set $\{(f\circ \Psi)_{r}:K\to \C;\, r\in [0,1)\}$ is dense in $C(K)$. Analogously, if $\rho=(r_n)_n\subset[0,1)$, tending to $1$ as $n\to \infty$ is given, then a function $f$ in $H(\D)$ satisfies the property $(P_{\Psi}^{\rho})$ whenever the set $\{(f\circ \Psi)_{r_n}:K\to \C;\, n\in \N\}$ is dense in $C(K)$.
	
	Note that $f$ belongs to $\UU_A^w(\D)$ (respectively $\UU_A^w(\D,\rho)$) if and only if it has $(P_{\Psi_w})$ (respectively $(P_{\Psi_w}^{\rho})$), with
	\begin{equation}\label{eq-def-psi-w}
		\Psi_w(re^{i\theta})=w+r(e^{i\theta}-w), \quad re^{i\theta} \in \D.
	\end{equation}
	Similarly, if $f\in H(\D)$ and $\Phi$ is an automorphism of the disc, then $f\circ \Phi$ belongs to $\UU_A(\D)$ (respectively $\UU_A(\D,\rho)$) if and only if $f$ satisfies the property $(P_{\Phi})$ (respectively $(P_{\Phi}^{\rho})$).
	
	\medskip
	
	Now, the crucial technical point that made the construction in the proof of Theorem \ref{thm-noninvariance} possible was the fact that if $\Phi$ is an automorphism of $\D$ that is not a rotation, then there exist $r_0\in [0,1)$ and $\zeta_1,\zeta_2\in \T$ such that $|\Phi(r\zeta_1)|\neq |\Phi(r\zeta_2)|$ for any $r\in [r_0,1)$. With this condition, it was possible to construct a function $f$ in $\UU_A(\D,\rho)$ such that the minimum of $|f\circ \Phi(r\zeta_1)|$ and $|f\circ \Phi(r\zeta_2)|$ is less than a constant, for any $r$ large enough. Up to minor technical details, it is not difficult to see that this condition can be replaced by the slightly weaker one: there exist $r_0\in [0,1)$ and $\zeta_1,\zeta_2,\zeta_3\in \T$ such that for any $r\in [r_0,1)$, the points $\Phi(r\zeta_1)$, $\Phi(r\zeta_2)$ and $\Phi(r\zeta_3)$ do not belong to the same circle centred at $0$ (\textit{i.e.}, there exists $i,j\in \{1,2,3\}$ such that $|\Phi(r\zeta_i)|\neq |\Phi(r\zeta_j)|$). Under this condition, we can build a function $f\in \UU_A(\D,\rho)$ such that for any $r\in [r_0,1)$, the quantity
	\[
	\min\left(|f\circ \Phi(r\zeta_1)|,|f\circ \Phi(r\zeta_2)|,|f\circ \Phi(r\zeta_3)|\right)
	\]
	is less than a finite constant independent of $r$. Thus the function $f\circ \Phi$ cannot belong to $\UU_A(\D)$.
	
	More generally, if $\Psi_1$ and $\Psi_2$ are two homeomorphisms from $\overline{\D}$ onto $\overline{\D}$, the same type of construction can be made in order to prove the following proposition:
	
	\begin{prop}\label{prop-general-invariance-right}Let $\Psi_1$ and $\Psi_2$ be two homeomorphisms from $\overline{\D}$ onto $\overline{\D}$. If there exist $r_0\in [0,1)$ and $\zeta_1,\zeta_2,\zeta_3\in \T$ such that for any $r\in [r_0,1)$, there is no $R\in [r_0,1)$ such that the points $\Psi_2(r\zeta_1)$, $\Psi_2(r\zeta_2)$ and $\Psi_2(r\zeta_3)$ all belong to the same closed curve $\Psi_1(C(0,R))$, then there exists $f\in H(\D)$ that satisfy $(P^{\rho}_{\Psi_1})$, but not $(P_{\Psi_2})$.
	\end{prop}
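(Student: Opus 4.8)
The plan is to reproduce, with three boundary curves in place of two, the inductive Mergelyan construction carried out in the proof of Theorem~\ref{thm-noninvariance}. Write $g:=\Psi_1^{-1}\circ\Psi_2$, a self-homeomorphism of $\overline{\D}$, so that $\Psi_2(r\zeta_i)=\Psi_1(g(r\zeta_i))$ for all $r$ and $i$; the hypothesis then reads: for every $r\in[r_0,1)$ the three numbers $|g(r\zeta_1)|,|g(r\zeta_2)|,|g(r\zeta_3)|$ are not all equal. Since $g(r\zeta_i)\to g(\zeta_i)\in\T$ as $r\to1$, after enlarging $r_0$ I may assume $|g(r\zeta_i)|>r_0$ for $r\in[r_0,1)$ and $r_n\in(r_0,1)$ for every $n$. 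Put $\Gamma_i:=\Psi_2([r_0,1]\zeta_i)=\Psi_1(g([r_0,1]\zeta_i))$: these are three pairwise disjoint Jordan arcs in $\overline{\D}$, each touching $\T$ only at $\Psi_2(\zeta_i)$. I would construct $f=\sum_nP_n\in H(\D)$, with $P_0=0$, the $P_n$ polynomials and $\sup_{\overline{D(0,r_{n-1})}}|P_n|\le\e_n$ (so the series converges in $H(\D)$), enforcing two things:
\begin{itemize}
	\item[(A)] $f$ satisfies $(P^{\rho}_{\Psi_1})$: exactly as in Theorem~\ref{thm-noninvariance}, at a step $n$ handling the pair $(\vp_{\alpha(n)},K_{\beta(n)})$ one forces $\sum_{k\le n}P_k$ to be within $\e_n$ of the continuous function $w\mapsto\vp_{\alpha(n)}(\Psi_1^{-1}(w)/r_n)$ on the Jordan arc $A_n:=\Psi_1(r_nK_{\beta(n)})\subset\Psi_1(C(0,r_n))$, so that the telescoping estimate of \eqref{eq-Abel-univ-Sn-infty} gives $(f\circ\Psi_1)_{r_{n_l}}\to\vp_{\alpha(n)}$ on $K_{\beta(n)}$;
	\item[(B)] $\min\bigl(|f(\Psi_2(r\zeta_1))|,|f(\Psi_2(r\zeta_2))|,|f(\Psi_2(r\zeta_3))|\bigr)\le\sum_k\e_k$ for every $r\in[r_0,1)$.
\end{itemize}
Property (B), together with the boundedness of $f$ on the compact set $\Psi_2(\overline{D(0,r_0)})\subset\D$, forces $f$ to fail $(P_{\Psi_2})$: on the proper compact set $K=\{\zeta_1,\zeta_2,\zeta_3\}\subset\T$, the constant function $c$ with $|c|$ exceeding both $\sum_k\e_k$ and $\sup_{\Psi_2(\overline{D(0,r_0)})}|f|$ cannot be approximated on $K$ by any dilate $(f\circ\Psi_2)_r$, $r\in[0,1)$.

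At step $n$ I look at which of $\Gamma_1,\Gamma_2,\Gamma_3$ meet $A_n$. On each $\Gamma_i$ not meeting $A_n$ I impose $\sup_{\Gamma_i}|P_n|\le\e_n$; on each one that does, I proceed as in Cases~II--III of Theorem~\ref{thm-noninvariance}, asking $\sum_{k\le n}P_k$ to be close, on $\Gamma_i$, to an auxiliary continuous function supported in short sub-arcs around the crossing points of $\Gamma_i$ with $\Psi_1(C(0,r_n))$ and equal to the prescribed value $\sum_{k\le n}P_k-\vp_{\alpha(n)}$ at those crossing points (this is consistent with the requirement on $A_n$, since such a crossing point, when it lies in $A_n$, is exactly where $\sum_{k\le n}P_k$ must already match the target). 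Mergelyan's theorem applies at each step because the relevant compactum --- the closed disc $L_{n-1}=\overline{D(0,r_{n-1})}$ together with $A_n$ and suitable sub-arcs of the $\Gamma_i$, the latter truncated (as in Theorem~\ref{thm-noninvariance}) so as to emanate from outside $L_{n-1}$ --- is a finite union of a closed disc and pairwise disjoint Jordan arcs none of which closes a loop, hence has connected complement.

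The heart of the matter is (B), and it is where the hypothesis is used. The short sub-arcs above are chosen so narrow that the set $\Sigma_i\subset[r_0,1)$ of parameters $r$ at which $f$ is permitted to be large at $\Psi_2(r\zeta_i)$ sits in an arbitrarily small neighbourhood of the crossing set $\Theta_i:=\{r: \Psi_2(r\zeta_i)\in A_n \text{ where } |g(r\zeta_i)|=r_n\}$; and one must carry out the construction --- choosing the bookkeeping functions $\alpha,\beta$ and the widths of the sub-arcs --- so that $\Sigma_1\cap\Sigma_2\cap\Sigma_3=\emptyset$, i.e.\ at every radius at least one of the three curves lies in a region that has only received negligible increments. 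This is exactly where the hypothesis enters: if $r$ belonged to all three $\Sigma_i$, then in particular $|g(r\zeta_i)|\in\rho$ for $i=1,2,3$ and, by the hypothesis, these three values are not all equal, so the three crossings occur with circles $\Psi_1(C(0,r_{n_i}))$ that are not all the same; using this, together with the freedom at each step $n$ to choose which proper compact $K_{\beta(n)}$ is tested and hence which crossings are forced to land inside $A_n$, one keeps, for every such $r$, at least one crossing out of its forced arc. Once $\Sigma_1\cap\Sigma_2\cap\Sigma_3=\emptyset$, (B) follows, because off $\Sigma_i$ every $P_n$ is negligible at $\Psi_2(r\zeta_i)$, whence $|f(\Psi_2(r\zeta_i))|\le\sum_k\e_k$.

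The main obstacle --- and the reason this is more delicate than Theorem~\ref{thm-noninvariance} --- is that, $\Psi_1$ and $\Psi_2$ being arbitrary homeomorphisms, the functions $r\mapsto|g(r\zeta_i)|$ need not be eventually monotone: Lemma~\ref{lemme-conf-not-inv}(2) is unavailable, so the clean parameters $R_n^i$ and the nested windows $(s_{n-1}^i,s_n^i)$ cannot be defined as there, and the crossing sets $\Theta_i$ are in general neither discrete nor monotonically arranged along $\Gamma_i$ (the level sets $\{r:|g(r\zeta_i)|=r_k\}$ may even be non-degenerate). The remedy is to replace the $R^i_n,s^i_n$ by first-passage-type parameters: on every compact subinterval $[r_0,\tilde r]\subset[r_0,1)$ one has $|g(r\zeta_i)|\le\tilde\rho<1$, so $|g(r\zeta_i)|$ meets only finitely many $r_k$ there; hence $\Theta_i$ is closed in $[r_0,1)$ and accumulates only at $1$, its complement is an open set on which $f$ stays negligible along $\Gamma_i$ and which carries the bulk of $[r_0,1)$, and around (or along) $\Theta_i$ one can carve the window regions and keep the ones belonging to different curves separated. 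Once this bookkeeping is set up, and once one checks (routinely) that every compactum that occurs has connected complement and that the $\Gamma_i$ can be truncated to protrude from the successive $L_{n-1}$, the construction of Theorem~\ref{thm-noninvariance} goes through essentially verbatim.
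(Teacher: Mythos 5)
Your overall strategy is exactly the one the paper intends: the authors omit the proof of this proposition, saying only that it can be written following the same lines as Theorem \ref{thm-noninvariance} up to technical details, and your proposal is a faithful three-curve, two-homeomorphism version of that construction. You also correctly isolate the new difficulties (loss of monotonicity of $r\mapsto|g(r\zeta_i)|$ for $g=\Psi_1^{-1}\circ\Psi_2$, possibly non-degenerate level sets, connectedness of the complements of the Mergelyan compacta), and your translation of the hypothesis into ``the three moduli $|g(r\zeta_i)|$ are never all equal'' is correct.

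The step I cannot accept as written is the claim that $\Sigma_1\cap\Sigma_2\cap\Sigma_3=\emptyset$ follows from the hypothesis ``together with the freedom at each step $n$ to choose which $K_{\beta(n)}$ is tested''. The hypothesis only excludes the event that the three points $\Psi_2(r\zeta_i)$ lie on one and the same curve $\Psi_1(C(0,R))$, i.e.\ simultaneous crossings at a single level. But a parameter $r$ can lie in all three $\Sigma_i$ through crossings at three \emph{different} levels: if $|g(r\zeta_1)|=r_{n_1}$, $|g(r\zeta_2)|=r_{n_2}$, $|g(r\zeta_3)|=r_{n_3}$ with the $n_i$ not all equal (which the hypothesis permits), then curve $i$ receives its large increment at parameter $r$ at step $n_i$, and since $f=\sum_kP_k$ accumulates the increments over all steps, the three values $f(\Psi_2(r\zeta_i))$ can all end up pinned near prescribed large targets $\vp_{\alpha(n_i)}$, so that the bound (B) fails at that $r$. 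Shrinking the sub-arcs does not help, because each window must contain the crossing point itself whenever it lies in $A_{n_i}$ (there the partial sum is forced to equal the target of step $n_i$). The burden therefore falls entirely on the bookkeeping: for every such $r$ one must arrange that at least one of the three crossings fails to land in its arc $A_{n_i}=\Psi_1(r_{n_i}K_{\beta(n_i)})$, while still treating every pair $(\vp_l,K_m)$ at infinitely many steps. Since each level $r_n$ is visited exactly once and the sets $\{r:|g(r\zeta_i)|=r_{n_i},\ i=1,2,3\}$ can be uncountable (even intervals) for general homeomorphisms, this is a genuine selection problem --- choosing, level by level, which arc to test so as to dodge, for each bad triple of levels, the compact curve $r\mapsto(g(r\zeta_i)/r_{n_i})_{i=1,2,3}$ in $\T^3$ --- and it is precisely where the hypothesis has to do real work. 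Your sketch names the right lever but does not pull it; as it stands this step is asserted rather than proved, and it is the one point at which the construction could actually fail.
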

	
	The proof of this proposition can be written following the same lines as that of Theorem \ref{thm-noninvariance}, up to technical details, and so is omitted. Now, we will explain how this proposition can be used in order to answer the above Questions \ref{quest-1-fin} and \ref{quest-2-fin}.
	
	\subsection{Answer to Question \ref{quest-1-fin}}
	Let $w_1\neq w_2$ in $\D$. To answer Question \ref{quest-1-fin}, one can directly apply Proposition \ref{prop-general-invariance-right} to the homeomorphisms $\Psi_{w_1}$ and $\Psi_{w_2}$ defined by \eqref{eq-def-psi-w}. Indeed, since $\Psi_1(C(0,R))$ and $\Psi_2(C(0,r))$ are different circles for any values of $r,R\in [0,1)$, and since there is only one circle passing through $3$ pairwise distinct points, it is true that, for any $3$-tuple $\zeta_1$, $\zeta_2$ and $\zeta_3$ in $\T$, the points $\Psi_2(r\zeta_1)$, $\Psi_2(r\zeta_2)$ and $\Psi_2(r\zeta_3)$ cannot all belong to $\Psi_1(C(0,R))$, whatever $R\in [0,1)$. Thus, by Proposition \ref{prop-general-invariance-right} we get that all classes introduced above are different:
	
	\begin{prop} Let $\rho_1,\rho_2$ be two increasing sequences in $[0,1)$ tending to $1$ and let $w_1\neq w_2$ in $\D$. Then
		\[
		\mathcal{U}_A^{w_1}(\D)\setminus \mathcal{U}_A^{w_2}(\D) \neq \emptyset\quad \text{and}\quad \UU_A^{w_1}(\D,\rho_1)\setminus \UU_A^{w_2}(\D,\rho_2) \neq \emptyset.
		\]
	\end{prop}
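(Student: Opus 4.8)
The plan is to derive this proposition directly from Proposition \ref{prop-general-invariance-right} applied to the pair of homeomorphisms $\Psi_1 = \Psi_{w_2}$ and $\Psi_2 = \Psi_{w_1}$ (chosen in this order so that the resulting function satisfies $(P^{\rho_2}_{\Psi_{w_2}})$, which is membership in $\UU_A^{w_2}(\D,\rho_2)$... wait, I need to be careful). Let me instead take $\Psi_1 = \Psi_{w_1}$ and $\Psi_2 = \Psi_{w_2}$: then the proposition produces an $f\in H(\D)$ satisfying $(P^{\rho_1}_{\Psi_{w_1}})$, i.e.\ $f\in \UU_A^{w_1}(\D,\rho_1)$, but not satisfying $(P_{\Psi_{w_2}})$, i.e.\ $f\notin \UU_A^{w_2}(\D)$. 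Since $\UU_A^{w_2}(\D,\rho_2)\subset \UU_A^{w_2}(\D)$, this $f$ is not in $\UU_A^{w_2}(\D,\rho_2)$ either, giving both asserted nonemptiness statements at once (the first with this same $f$, the second likewise). So the entire content reduces to verifying the geometric hypothesis of Proposition \ref{prop-general-invariance-right} for the pair $(\Psi_{w_1},\Psi_{w_2})$.

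Next I would carry out that verification. By the formula \eqref{eq-def-psi-w}, for fixed $R\in[0,1)$ the set $\Psi_{w_1}(C(0,R)) = \{w_1 + R(e^{i\theta}-w_1):\theta\in[0,2\pi)\}$ is the circle centred at $w_1 - Rw_1 = (1-R)w_1$ of radius $R$; similarly $\Psi_{w_2}(C(0,r))$ is the circle centred at $(1-R')w_2$... no, at $(1-r)w_2$ of radius $r$. When $w_1\neq w_2$, these two families of circles never coincide: a circle $\Psi_{w_1}(C(0,R))$ and a circle $\Psi_{w_2}(C(0,r))$ agree only if they have the same centre and radius, which forces $R = r$ and $(1-R)w_1 = (1-R)w_2$; since $R<1$ this gives $w_1 = w_2$, a contradiction. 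Thus no circle of the second family equals any circle of the first family. Since three pairwise distinct points in $\C$ lie on a unique circle, it follows that for \emph{any} three pairwise distinct points $\zeta_1,\zeta_2,\zeta_3\in\T$ and any $r\in[0,1)$, the images $\Psi_{w_2}(r\zeta_1),\Psi_{w_2}(r\zeta_2),\Psi_{w_2}(r\zeta_3)$ — which are three pairwise distinct points, as $\Psi_{w_2}$ is injective on $C(0,r)$ — lie on the unique circle $\Psi_{w_2}(C(0,r))$, and this circle is never of the form $\Psi_{w_1}(C(0,R))$. Hence the hypothesis of Proposition \ref{prop-general-invariance-right} holds with $r_0 = 0$ and any choice of three distinct points on $\T$.

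Finally I would assemble the argument: fix three distinct points on $\T$, invoke Proposition \ref{prop-general-invariance-right} with $\Psi_1=\Psi_{w_1}$, $\Psi_2=\Psi_{w_2}$ and $\rho=\rho_1$ to obtain $f$ satisfying $(P^{\rho_1}_{\Psi_{w_1}})$ but not $(P_{\Psi_{w_2}})$, translate back into the language of $\UU_A^{w}$-classes using the equivalence noted just before the statement of Proposition \ref{prop-general-invariance-right}, and conclude $f\in \UU_A^{w_1}(\D,\rho_1)\setminus\UU_A^{w_2}(\D)$, hence also $f\in \UU_A^{w_1}(\D,\rho_1)\setminus\UU_A^{w_2}(\D,\rho_2)$. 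Swapping the roles of $w_1$ and $w_2$ (and of $\rho_1$, $\rho_2$) if one wants symmetric statements for $\mathcal{U}_A^{w_1}(\D)\setminus\mathcal{U}_A^{w_2}(\D)$ is immediate. The only real obstacle is bookkeeping: making sure the order of $\Psi_1,\Psi_2$ matches the asymmetric conclusion of Proposition \ref{prop-general-invariance-right} (the function is universal with respect to $\Psi_1$ and non-universal with respect to $\Psi_2$), and checking that $\Psi_w$ really is a homeomorphism of $\overline{\D}$ — which is clear since it is an affine bijection of $\C$ fixing $\T$ setwise and mapping $\D$ to $\D$. No genuine analytic difficulty arises; the geometric input (distinct-circle families) is elementary.
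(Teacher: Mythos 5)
Your proof is correct and is essentially the paper's own argument: both apply Proposition \ref{prop-general-invariance-right} to the pair $\Psi_{w_1},\Psi_{w_2}$ and observe that, since the circles $\Psi_{w_1}(C(0,R))$ (centre $(1-R)w_1$, radius $R$) and $\Psi_{w_2}(C(0,r))$ (centre $(1-r)w_2$, radius $r$) never coincide when $w_1\neq w_2$, three pairwise distinct points of $\Psi_{w_2}(C(0,r))$ can never all lie on a curve of the first family. The only nitpick is that your choice $r_0=0$ does not quite verify the hypothesis (at $r=0$ the three image points all collapse to the single point $w_2$, which does lie on some circle $\Psi_{w_1}(C(0,R))$); taking any $r_0>0$ fixes this, since then the three images are genuinely distinct and the unique-circle argument applies.
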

	
	\begin{remark}{\rm One might be tempted to see a contrast with the fact that the universality of the partial sums of universal Taylor series on $\D$ does not depend on the choice of the center of their Taylor expansions, see \cite{MelasNestoridis2001}.
		}
	\end{remark}
	
	\subsection{Anwser to Question \ref{quest-2-fin}}
	
	Let now $w\in \D$ be given, and let $\Phi:\D \to \D$ be holomorphic and continuous on $\overline{\D}$. First of all, the same proof as that of Proposition \ref{prop-Blaschke-to-autom} makes it clear that if $f\circ \Phi$ belongs to $\UU_A^w(\D)$ for any $f\in \UU_A^w(\D,\rho)$, then $\Phi$ is an automorphism. Thus we may and shall assume that $\Phi$ is an automorphism of $\D$.
	
	Now, Proposition \ref{prop-general-invariance-right} tells us that if $\Phi$ is an automorphism of $\D$ such that $f\circ \Phi \in \UU_A^w(\D)$ for any $f\in \UU_A^w(\D,\rho)$, then there exist at least two (in fact infinitely many) distinct numbers $r_1,r_2$ in $[0,1)$ and two (again, in fact infinitely many) $3$-tuples $(\zeta_1,\zeta_2,\zeta_3)$ and $(\xi_1,\xi_2,\xi_3)$ of pairwise distinct points in $\T^3$, such that there exist $R_1,R_2\in [0,1)$ for which
	\begin{itemize}
		\item the points $\Phi\circ \Psi_w(r_1\zeta_1),\Phi\circ \Psi_w(r_1\zeta_2),\Phi\circ \Psi_w(r_1\zeta_3)$ belong to $\Psi_w(C(0,R_1))$;
		\item the points $\Phi\circ \Psi_w(r_2\xi_1),\Phi\circ \Psi_w(r_2\xi_2),\Phi\circ \Psi_w(r_2\xi_3)$ belong to $\Psi_w(C(0,R_2))$.
	\end{itemize}
	Observe that the pairwise distinct points $r_1\zeta_i$ (respectively $r_2\xi_i$), $i\in \{1,2,3\}$, belong to the circle $C(0,r_1)$ (respectively $C(0,r_2)$), and that $\Psi_w(C(0,r))$ is the circle $C((1-r)w,r)$. Since automorphisms of $\D$ map circles onto circles and since there is only one circle passing through $3$ pairwise distinct points, we deduce from the previous that the property ``$f\circ \Phi \in \UU_A^w(\D)$ for any $f\in \UU_A^w(\D,\rho)$'' holds only if, for at least two pairs $(r_1,r_1'),(r_2,r_2')\in (0,1)^2$, with $r_i\neq r_j$ and $r_i'\neq r_j'$, $\Phi$ maps the circle $C((1-r_i)w,r_i)$ onto $C((1-r'_i)w,r'_i)$, $i=1,2$.
	
	In the case $w=0$, the proof of Lemma \ref{lemme-conf-not-inv} tells us that the latter condition can only happen if $\Phi$ is a rotation (or it can also be recalled as a well-known fact that the only automorphisms of $\D$ that map a circle centred at $0$ onto a circle centred at $0$ are the rotations). The following lemma is a generalization of this fact, and tells us in particular that, if $w\neq 0$, this condition holds only if $\Phi$ is the identity map. For simplicity, we will use in the sequel the notation $C_w(r)=C((1-r)w,r)$.
	
	\begin{lemma}\label{lemma-noninv-w}Let $\Phi$ be an automorphism of $\D$. Let $(r_1,r_1'),(r_2,r_2')\in (0,1)^2$, with $r_i\neq r_j$ and $r_i'\neq r_j'$, and assume that $\Phi$ maps $C_w(r_i)$ onto $C_w(r_i')$ for $i=1,2$. Then $r_i=r_i'$ for $i=1,2$ and, either $w=0$, in which case $\Phi$ is a rotation, or $w\neq0$ and $\Phi(z)=z$ for any $z\in \D$.
	\end{lemma}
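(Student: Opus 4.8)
The plan is to encode each circle $C_w(r)$ by a single real invariant that $\Phi$ must preserve — its inversive distance to $\T$ — and then to combine this with a rigidity statement for Möbius maps fixing several circles setwise.

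First I compute that invariant. Writing $c=|w|^2<1$, the circle $C_w(r)=C((1-r)w,r)$ has radius $r$ and centre at distance $(1-r)|w|$ from $0$, while $\T=C(0,1)$, so the standard formula for the inversive distance of two circles gives
\[
\delta(r):=\operatorname{invdist}\bigl(C_w(r),\T\bigr)=\left|\frac{(1-r)^2c-r^2-1}{2r}\right|=(1-c)\,\frac{r+r^{-1}}{2}+c .
\]
Since $1-c>0$ and $r\mapsto\tfrac12(r+r^{-1})$ is strictly decreasing on $(0,1)$, the function $\delta$ is strictly decreasing, hence injective, on $(0,1)$. As $\Phi$ is an automorphism of $\D$, it extends to a Möbius transformation of $\widehat{\C}$ with $\Phi(\T)=\T$, and Möbius transformations preserve inversive distance; therefore
\[
\delta(r_i)=\operatorname{invdist}\bigl(\Phi(C_w(r_i)),\Phi(\T)\bigr)=\operatorname{invdist}\bigl(C_w(r_i'),\T\bigr)=\delta(r_i')
\]
for $i=1,2$, and injectivity of $\delta$ forces $r_i=r_i'$. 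In particular $\Phi$ fixes setwise each of the three circles $\T$, $C_w(r_1)$, $C_w(r_2)$, which are pairwise disjoint because the family $\{C_w(r)\}$ is nested, $C_w(r)\subset\D$ for $r<1$, and $r_1\ne r_2$.

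The rest is elementary Möbius geometry. I claim first that for $w\ne0$ the circles $\T,C_w(r_1),C_w(r_2)$ are \emph{not} coaxial: after a rotation we may assume $w=t\in(0,1)$, and subtracting the equation of $\T$ from that of $C_w(r)$ shows that the radical axis of $C_w(r)$ and $\T$ is the vertical line $\{\Re z=\tfrac{t^2+1}{2t}+\tfrac{(1-t^2)r}{2t}\}$, which depends genuinely on $r$ since $t\ne0,1$; hence the radical axes of $(C_w(r_1),\T)$ and $(C_w(r_2),\T)$ differ, so the three circles are not coaxial (nor concentric, their centres being distinct). I then invoke the rigidity fact that a Möbius transformation of $\widehat{\C}$ fixing setwise three pairwise disjoint non-coaxial circles $A_1,A_2,A_3$ is the identity: pick a Möbius map $T$ sending $A_1,A_2$ to concentric circles $\{|z|=\rho_1\},\{|z|=\rho_2\}$ with $\rho_1\ne\rho_2$; then $T\Phi T^{-1}$ fixes both of these setwise, hence fixes the only point-pair inverse with respect to both, namely $\{0,\infty\}$ (a pair $\{p,q\}$ with $p\bar q=\rho_1^2$ and $p\bar q=\rho_2^2$ can only be $\{0,\infty\}$), so $T\Phi T^{-1}$ is $z\mapsto\lambda z$ or $z\mapsto\lambda/z$; the latter would require $|\lambda|=\rho_1^2=\rho_2^2$, impossible, and the former forces $|\lambda|=1$; finally $T(A_3)$ is not centred at $0$ — otherwise $A_1,A_2,A_3$ would be simultaneously concentric, i.e. coaxial — and a circle not centred at $0$ is invariant under $z\mapsto e^{i\psi}z$ only when $\psi\equiv0$, so $T\Phi T^{-1}=\operatorname{id}$ and $\Phi=\operatorname{id}$. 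Applying this with $A_1=C_w(r_1),A_2=C_w(r_2),A_3=\T$ gives $\Phi=\operatorname{id}$ when $w\ne0$.

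When $w=0$ the three circles are concentric, so in the argument above $T$ may be taken to be the identity and one only obtains $\Phi\colon z\mapsto e^{i\psi}z$, i.e. $\Phi$ is a rotation; together with $r_i=r_i'$ this is exactly the asserted conclusion in that case. The main obstacle is the rigidity step: one must check that any two disjoint circles normalise to concentric circles, that the only Möbius maps fixing two distinct concentric circles setwise are the rotations about their common centre, and that non-coaxiality is precisely what keeps $T(A_3)$ off-centre; the inversive-distance computation and the disjointness/nesting bookkeeping are routine.
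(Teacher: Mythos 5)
Your proof is correct, but it takes a genuinely different route from the paper's. The paper first uses the nesting of the closed discs $\overline{D((1-r)w,r)}$ to produce a fixed point $a\in\D$ of $\Phi$, conjugates by an automorphism $\varphi_a$ sending $0$ to $a$ so that $\Phi$ becomes a Euclidean rotation $\Psi$ about $0$, reads off $r_i=r_i'$ from the fact that a rotation carrying one circle onto a circle nested with it must fix it, and then, in the case where $\Psi$ is a nontrivial rotation (which forces both circles $\varphi_a^{-1}(C_w(r_i))$ to be centred at $0$), uses the explicit Euclidean centre/radius formulas for pseudo-hyperbolic discs to force $a=0$, hence $w=0$. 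You instead get $r_i=r_i'$ at once from the strict monotonicity in $r$ of the inversive distance between $C_w(r)$ and $\T$; your computation $\delta(r)=(1-c)\tfrac{r+r^{-1}}{2}+c$ with $c=|w|^2<1$ is correct, and $\delta$ is indeed strictly decreasing on $(0,1)$ (it is essentially a monotone function of the hyperbolic radius of $C_w(r)$, which is another way to see its $\Phi$-invariance). You then replace the paper's pseudo-hyperbolic computation by a rigidity argument: a M\"obius map fixing setwise three pairwise disjoint, non-coaxial circles is the identity, and for $w\neq 0$ your radical-axis computation correctly shows that $\T$, $C_w(r_1)$, $C_w(r_2)$ are not coaxial. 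Every step of your rigidity sketch checks out --- uniqueness of the common inverse pair $\{0,\infty\}$ for two distinct concentric circles, exclusion of $z\mapsto\lambda/z$, the off-centre third circle killing the rotation angle, and the degenerate concentric case $w=0$ yielding only a rotation --- though you lean on two classical facts (M\"obius invariance of the inversive distance, normalisation of two disjoint circles to concentric ones) that deserve a citation. Your version isolates the two mechanisms more cleanly (a one-parameter M\"obius invariant separating the circles $C_w(r)$, plus rigidity of non-coaxial triples), while the paper's is more self-contained, relying only on the Schwarz--Pick disc formulas it already cites.
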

	
	\begin{proof}
		Let $\Phi$ be an automorphism of $\D$ satisfying the condition of the lemma. Then, necessarily, $\Phi$ maps $\overline{D((1-r_i)w,r_i)}$ onto $\overline{D((1-r'_i)w,r'_i)}$, $i=1,2$. Now, it is easily seen that $D((1-r)w,r)$ is contained in  $D((1-r')w,r')$ whenever $r\leq r'$. As a consequence, $\Phi$ must have a fixed point in one of the above discs, and so in $\D$. Let us denote by $a$ this fixed point. Let us also denote by $\vp_a$ an automorphism of $\D$ that maps $0$ to $a$. Then $\Psi:=\vp_a^{-1}\circ \Phi \circ \vp_a$ is an automorphism of $\D$ that fixes $0$, hence $\Psi$ is a rotation. Moreover, since $\Phi$ maps $C_w(r_i)$ onto $C_w(r_i')$ for $i=1,2$, we get
		\[
		\Psi\left(\vp_a^{-1}\left(C_w(r_i)\right)\right)=\vp_a^{-1}\left(C_w(r'_i)\right).
		\]
		Moreover, using again the fact that $D((1-r)w,r)$ is contained in  $D((1-r')w,r')$ whenever $r\leq r'$, and since automorphisms of $\D$ map circles onto circles, $\vp_a^{-1}\left(C_w(r_i)\right)$ and $\vp_a^{-1}\left(C_w(r'_i)\right)$ are circles and one of them is contained in the open disc bounded by the other. Since $\Psi$ is a rotation, this implies that $r_i=r_i'$ and that, either $\Psi$ is the identity map, or that $\vp_a^{-1}\left(C_w(r_i)\right)$ is a circle $C(0,R_i)$ centred at $0$, for some $R_i\in (0,1)$, $i=1,2$, with $R_1\neq R_2$. If $\Psi$ is the identity map, then so is $\Phi$. Let us then assume that $\Psi$ is a rotation that is not the identity map.
		
		Then $\vp_a\left(C(0,R_i)\right)=C_w(r_i)$ for $i=1,2$. Let $R\in(0,1)$ be fixed. By Schwartz-Pick lemma, $\vp_a(C(0,R))$ is the pseudo-hyberbolic disc centred at $a$ with radius $R$ (for the pseudo-hyperbolic distance), or equivalently the hyperbolic disc centred at $a$ with radius $2\tanh^{-1}(R)$ (with respect to the hyperbolic distance), see for e.g. \cite[Theorem 2.2]{BeardonMinda2007}. This disc is a Euclidean one with center $c$ and (euclidean) radius $r$ given by the formulae (\cite[Exercise 3, Page 16]{BeardonMinda2007})
		\[
		c=\frac{a\left(1-R^2\right)}{1-|a|^2R^2}\quad\text{and}\quad r=\frac{\left(1-|a|^2\right)R}{1-|a|^2R^2}.
		\]
		Thus, given $r\in (0,1)$, the equality $\vp_a\left(C(0,R)\right)=C_w(r)$ for some $R\in (0,1)$ imposes the conditions
		\begin{equation}\label{eq-Uw-inv}
			(1-r)w=\frac{a\left(1-R^2\right)}{1-|a|^2R^2}\quad\text{and}\quad r=\frac{\left(1-|a|^2\right)R}{1-|a|^2R^2}.
		\end{equation}
		Replacing the value of $r$ given by the second equality in the first one, a direct computation leads to the conditions $a=0$ or
		\[
		R=\frac{w-a}{a-|a|^2w}.
		\]
		If $a\neq 0$ then the above condition imposes that $R_i=\frac{w-a}{a-|a|^2w}$, $i=1,2$, which contradicts the fact that $R_1\neq R_2$. Therefore $a=0$, that is equivalent to $w=0$ by \eqref{eq-Uw-inv}. Now $w=0$ implies that $\Phi$ is rotation by Lemma \ref{lemme-conf-not-inv}, which concludes the proof of the lemma.
	\end{proof}
	
	Altogether, we have proved the following complement to Theorem \ref{thm-noninvariance}:
	
	\begin{theorem}\label{thm-w-general-noninvariance}Let $w\in \D\setminus \{0\}$ and let $\Phi:\D \to \D$ be holomorphic and continuous on $\overline{\D}$. Then $f\circ \Phi$ belongs to $\UU_A^w(\D)$ for any $f \in \UU_A^w(\D,\rho)$ if and only if $\Phi$ is the identity map.
	\end{theorem}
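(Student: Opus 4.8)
The \emph{if} direction is immediate: when $\Phi$ is the identity, $f\circ\Phi=f\in\UU_A^w(\D,\rho)\subset\UU_A^w(\D)$ for every $f\in\UU_A^w(\D,\rho)$.

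For the \emph{only if} direction, assume $f\circ\Phi\in\UU_A^w(\D)$ for every $f\in\UU_A^w(\D,\rho)$. The plan is to first reduce to the case where $\Phi$ is an automorphism, and then combine Proposition~\ref{prop-general-invariance-right} with Lemma~\ref{lemma-noninv-w}. The reduction is exactly the argument used in the proof of Proposition~\ref{prop-Blaschke-to-autom}: since $f(\gamma([0,1)))$ is dense in $\C$ for every path $\gamma$ in $\D$ landing on $\T$, the continuity of $\Phi$ on $\overline{\D}$ forces $|\Phi(\gamma(r))|\to1$, so $\Phi$ is a finite Blaschke product, and the multivalence argument of that proof then rules out every Blaschke product that is not an automorphism. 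Hence I may assume $\Phi\in\mathrm{Aut}(\D)$. Next I would unwind the hypothesis through the properties $(P_{\Psi})$ of this section: because $((f\circ\Phi)\circ\Psi_w)_r=(f\circ(\Phi\circ\Psi_w))_r$, the hypothesis states precisely that every $f$ satisfying $(P_{\Psi_w}^{\rho})$ also satisfies $(P_{\Phi\circ\Psi_w})$; equivalently, there is \emph{no} $f\in H(\D)$ enjoying $(P_{\Psi_w}^{\rho})$ but not $(P_{\Phi\circ\Psi_w})$.

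Now apply Proposition~\ref{prop-general-invariance-right} in contrapositive form with $\Psi_1=\Psi_w$ and $\Psi_2=\Phi\circ\Psi_w$: its geometric hypothesis must fail, so for every $r_0\in(0,1)$ and every triple of pairwise distinct points $\zeta_1,\zeta_2,\zeta_3\in\T$ there exist $r\in[r_0,1)$ and $R\in[r_0,1)$ with $\Phi\circ\Psi_w(r\zeta_1),\Phi\circ\Psi_w(r\zeta_2),\Phi\circ\Psi_w(r\zeta_3)$ all on $\Psi_w(C(0,R))$. Fixing three distinct points of $\T$ and applying this once with some $r_0\in(0,1)$ and once more with an $r_0$ strictly larger than the radius just produced, I obtain $0<r_1<r_2<1$ and corresponding $R_1,R_2\in(0,1)$ (note $r_i,R_i\geq r_0>0$, so no degeneracy occurs). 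Since $\Psi_w$ is a homeomorphism of $\overline{\D}$ and $\Psi_w(C(0,s))=C_w(s)=C((1-s)w,s)$ is a genuine circle for $s\in(0,1)$, the points $\Psi_w(r_i\zeta_1),\Psi_w(r_i\zeta_2),\Psi_w(r_i\zeta_3)$ are three distinct points of $C_w(r_i)$; as an automorphism of $\D$ sends circles to circles and three points determine a circle, it follows that $\Phi(C_w(r_i))=C_w(R_i)$ for $i=1,2$, and $R_1\neq R_2$ because $\Phi$ is injective and $C_w(r_1)\neq C_w(r_2)$.

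It remains to invoke Lemma~\ref{lemma-noninv-w} with the pairs $(r_1,R_1),(r_2,R_2)\in(0,1)^2$: they satisfy $r_1\neq r_2$, $R_1\neq R_2$, and $\Phi$ maps $C_w(r_i)$ onto $C_w(R_i)$, so since $w\neq0$ the lemma yields $\Phi(z)=z$ for all $z\in\D$, completing the proof. All the real work lies upstream — in the inductive construction behind Proposition~\ref{prop-general-invariance-right} and in the Schwarz--Pick computation behind Lemma~\ref{lemma-noninv-w} — so the only points needing care here are the logical translation through the $(P_{\Psi})$-properties and the extraction of two distinct admissible radii $r_1<r_2$ from the negated conclusion of Proposition~\ref{prop-general-invariance-right}; I expect the latter bookkeeping (and the verification that the degenerate cases $r_i=0$ or $R_i=0$ are excluded) to be the only mild obstacle.
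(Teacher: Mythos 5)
Your proposal is correct and follows essentially the same route as the paper: reduce to automorphisms via the argument of Proposition~\ref{prop-Blaschke-to-autom}, apply Proposition~\ref{prop-general-invariance-right} in contrapositive with $\Psi_1=\Psi_w$ and $\Psi_2=\Phi\circ\Psi_w$ to extract two pairs of radii for which $\Phi$ maps $C_w(r_i)$ onto $C_w(R_i)$ (using that three distinct points determine a circle and that automorphisms preserve circles), and conclude with Lemma~\ref{lemma-noninv-w}. Your explicit bookkeeping for obtaining two distinct radii (re-applying the negated hypothesis with a larger $r_0$) and for excluding the degenerate radii is exactly what the paper leaves implicit in its ``in fact infinitely many'' remark.
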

	
	The explanations that lead us to this result suggest the following remark.
	
	\begin{remark}{\rm In view of the above explanations, the definition of the class $\UU^w_A(\D,\rho)$ may not seem natural. Rather, it might be more natural to define the class $\VV_A^w(\D,\rho)$ of those $f\in H(\D)$ such that $f\circ \vp_w \in \UU_A(\D,\rho)$, where $\vp_w$ is an automorphism of $\D$ that maps $0$ to $w$. In this case, $\vp_w$ sends circles centered at $0$ to hyperbolic circles centered at $w$ and it is clear from the proof of Theorem \ref{thm-w-general-noninvariance} that if $\Phi:\D\to \D$ is holomorphic and continuous on $\overline{\D}$, then $f\circ \Phi \in \VV_A^w(\D,\rho)$ for any $f\in \VV_A^w(\D,\rho)$ if and only if $\vp_w^{-1}\circ \Phi\circ \vp_w$ is a rotation.
		}
	\end{remark}

	\section{A residual set of Abel universal functions that is invariant under composition by any automorphism}\label{residual}
	
	We conclude the paper by establishing the existence of a class of functions that is residual in $H(\D)$, contained in $\UU_A(\D,\rho)$, and that is invariant under composition from the right by any automorphism of $\D$.
	
	\begin{theorem}\label{thm-all-auto}Let $\rho=(r_n)_n$ be an increasing sequence converging to $1$ as $n\to \infty$. There exists a residual set of functions $f\in H(\D)$ such that $f\circ\Phi\in\UU_A(\D,\rho)$ for every automorphism $\Phi$ of the unit disc.
	\end{theorem}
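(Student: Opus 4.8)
The goal is to produce, via a Baire category argument, a dense $G_\delta$ set of functions $f \in H(\D)$ such that $f \circ \Phi \in \UU_A(\D,\rho)$ simultaneously for \emph{every} automorphism $\Phi$ of $\D$. The difficulty is that the family of automorphisms is uncountable, so one cannot naively intersect countably many residual sets indexed by $\Phi$. The plan is to reduce to a countable dense subfamily of automorphisms using a continuity/perturbation argument, and then to run the standard universality Baire argument uniformly over that subfamily.

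First I would fix notation. For an automorphism $\Phi$, a proper compact $K \subset \T$, a target polynomial $\vp$ with coefficients in $\Q + i\Q$, and $\e > 0$, consider the set
\[
E(\Phi, K, \vp, \e) = \set*{ f \in H(\D) : \exists\, n \in \N,\ \sup_{\zeta \in K} \abs*{f(\Phi(r_n \zeta)) - \vp(\zeta)} < \e }.
\]
Each such set is open in $H(\D)$ (the map $f \mapsto f \circ \Phi \circ (r_n \cdot)$ is continuous $H(\D) \to C(K)$, since $\Phi(r_n \overline{\D})$ is a compact subset of $\D$), and one checks density exactly as in the proof of Theorem \ref{alg-papath}: given a compact $L \subset \D$ and a target on $L$, choose $n$ large so that $L$ and $\Phi(r_n K)$ are disjoint compact subsets of $\D$ with connected complement in $\D$, and apply Mergelyan's theorem to prescribe $f$ near the target on $L$ and to force $f \circ \Phi \circ (r_n\cdot) \approx \vp$ on $K$. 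So for each \emph{fixed} $\Phi$, the set of $f$ with $f \circ \Phi \in \UU_A(\D,\rho)$ is residual, being an intersection over countably many $(K_n, \vp_m, \e_k)$ as in Definition \ref{def-Abel-univ-funct-bis}.

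The key step — and the main obstacle — is handling all $\Phi$ at once. Here I would exploit a continuity-in-$\Phi$ phenomenon: if $f \circ \Phi_0 \in \UU_A(\D,\rho)$ \emph{and} the approximation is witnessed robustly, then $f \circ \Phi \in \UU_A(\D,\rho)$ for all $\Phi$ in a neighbourhood of $\Phi_0$. The cleanest way to arrange this is to enlarge the approximating data: instead of fixing a single proper compact arc $K$, use a countable family $(K_n)$ of \emph{closed arcs} as in the construction of Theorem \ref{thm-noninvariance}, and note that an automorphism $\Phi$ maps $r_n K$ (part of a circle centred at $0$) into $\D$, and moves continuously with $\Phi$. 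Concretely, parametrize $\mathrm{Aut}(\D)$ by $(a,\theta) \in \D \times [0,2\pi)$ via $\Phi_{a,\theta}(z) = e^{i\theta}\frac{a-z}{1 - \bar a z}$, fix a countable dense set $A \subset \D \times [0,2\pi)$, and define
\[
\UU = \bigcap_{(a,\theta) \in A}\ \bigcap_{n,m,k}\ E(\Phi_{a,\theta}, K_n, \vp_m, 1/k).
\]
This is residual by Baire. It remains to show $\UU$ works for \emph{all} $\Phi$, not just those in $A$. Given an arbitrary automorphism $\Phi$, a proper compact $K \subset \T$ (so $K \subset K_n$ for some $n$), a $\vp \in C(K_n)$ and $\e > 0$: approximate $\vp$ by some $\vp_m$ in sup-norm on $K_n$, pick $(a,\theta) \in A$ with $\Phi_{a,\theta}$ so close to $\Phi$ (uniformly on $\overline{\D}$) that we can control the comparison below, and pick $f \in \UU$ so there is $n_0$ with $\sup_{\zeta \in K_n}\abs{f(\Phi_{a,\theta}(r_{n_0}\zeta)) - \vp_m(\zeta)}$ small.

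The subtlety is that $\Phi_{a,\theta}$ and $\Phi$ send $r_{n_0}K_n$ to \emph{different} compact sets in $\D$, so one cannot directly compare $f(\Phi(r_{n_0}\zeta))$ with $f(\Phi_{a,\theta}(r_{n_0}\zeta))$ using only that $f$ is close to $\vp_m$ after $\Phi_{a,\theta}$ — one would need $f$ to be controlled on a neighbourhood, which category does not give. The fix is to replace single indices $n_0$ by the requirement that \emph{all sufficiently large} $n_0$ work, together with a uniform-continuity estimate; but in fact the honest way, which I expect the authors use, is an \emph{inductive construction} rather than a pure Baire argument for the hard inclusion. That is: one shows that $\UU$ as defined above already has the property, by noting that membership in $\UU$ forces, for the countable dense set of automorphisms, approximation of \emph{all} continuous targets on \emph{all} arcs along \emph{infinitely many} dilation radii; then for a general $\Phi$ one writes $\Phi = \Phi_{a,\theta} \circ \Psi$ with $\Psi$ an automorphism close to the identity and $(a,\theta)\in A$, and uses that $f\circ\Phi_{a,\theta}$ is Abel universal to transfer — but this requires invariance under $\Psi$, which is precisely what Theorem \ref{thm-noninvariance} forbids unless $\Psi$ is a rotation.

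So the genuine mechanism must be different, and this is where I would invest the effort: build $f$ by induction, as in the proof of Theorem \ref{thm-noninvariance} but now arranging the approximation data so that a \emph{single} sequence of partial Mergelyan steps simultaneously forces, for each $(a,\theta)$ in the countable dense set $A$, each arc $K_n$, each target $\vp_m$, and each tolerance $1/k$, the existence of a radius $r_N$ with $\sup_{\zeta\in K_n}\abs{(\sum_{j\le N} P_j)(\Phi_{a,\theta}(r_N\zeta)) - \vp_m(\zeta)} \le 1/k$, while keeping $\sum P_j$ convergent on $\D$ via uniform control on an exhaustion by compact discs. The crucial geometric observation enabling this is that for any automorphism $\Phi$ and any proper arc $K$, the sets $\{\Phi(r_n K) : n\}$ eventually avoid any fixed compact disc centred at $0$ and have connected complement in $\D$, so each inductive step is a legitimate Mergelyan approximation on a compact set with connected complement. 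Continuity in $(a,\theta)$ then upgrades "for all $(a,\theta)\in A$" to "for all automorphisms": given arbitrary $\Phi$, approximate it on $\overline{\D}$ by $\Phi_{a,\theta}$ with $(a,\theta)\in A$, use uniform continuity of $f$ on the compact set $\Phi_{a,\theta}(r_N K) \cup \Phi(r_N K) \subset\subset \D$ to conclude $f(\Phi(r_N\zeta))$ is within $2/k$ of $\vp_m(\zeta)$, hence within $3/k$ of the original target. The residuality of the resulting set follows because the inductive scheme can be run starting from any finite polynomial data, which is the standard way to extract a dense $G_\delta$ from a constructive universality proof. The main obstacle throughout is the uncountability of $\mathrm{Aut}(\D)$, resolved by the density-plus-uniform-continuity reduction to the countable set $A$; the rest is a careful but routine Mergelyan induction patterned on Theorem \ref{thm-noninvariance}.
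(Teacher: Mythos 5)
You correctly isolate the real obstacle (the uncountability of $\mathrm{Aut}(\D)$) and correctly reject the naive reduction to a countable dense family, observing that one ``would need $f$ to be controlled on a neighbourhood''. But your proposed repair — an inductive Mergelyan construction forcing approximation along the curves $\Phi_{a,\theta}(r_NK)$ for $(a,\theta)$ in a countable dense set $A$, followed by an upgrade to all automorphisms ``by uniform continuity of $f$ on $\Phi_{a,\theta}(r_NK)\cup\Phi(r_NK)$'' — runs into exactly the obstacle you yourself identified. The witness radius $r_N$ depends on the chosen $(a,\theta)\in A$, while the admissible distance between $\Phi$ and $\Phi_{a,\theta}$ depends on the modulus of continuity of $f$ near $\Phi_{a,\theta}(r_NK)$, a compact set approaching $\T$ as $N\to\infty$; since any Abel universal function is unbounded near every boundary point, there is no modulus of continuity uniform in $N$, and the choice of $(a,\theta)$ cannot be made before $N$ is known. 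This circularity is not resolved anywhere in your sketch, so the key step (passing from $A$ to all of $\mathrm{Aut}(\D)$) is a genuine gap. The closing claim that residuality ``follows because the inductive scheme can be run starting from any finite polynomial data'' is also only asserted, not carried out.

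The paper's mechanism is different and is, contrary to your expectation, a pure Baire argument. After reducing (via invariance under rotations) to the family $\Phi_w$, one covers the parameter space $\D$ by countably many small closed discs $\overline{D(w_l,\delta_l)}$ and requires $f$ to approximate the \emph{fixed} function $\vp_m\circ\Phi_{w_l}^{-1}$ on the \emph{thickened} compact set $F=\bigcup_{w\in\overline{D(w_l,\delta_l)}}\Phi_w(r_kK_n)$, which still has connected complement and eventually avoids any fixed compact disc, so Mergelyan gives open dense sets. For an arbitrary parameter $\tau$ in the disc one then estimates
\[
|f(\Phi_\tau(r_kz))-\vp_m(r_kz)|\le |f(\Phi_\tau(r_kz))-\vp_m(\Phi_{w_l}^{-1}(\Phi_\tau(r_kz)))|+|\vp_m(\Phi_{w_l}^{-1}(\Phi_\tau(r_kz)))-\vp_m(r_kz)|,
\]
where the first term is controlled because $\Phi_\tau(r_kz)\in F$ and the second by the uniform continuity of the \emph{polynomial} $\vp_m$, arranged in advance by taking $\delta_l$ small. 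The burden of continuity is thus carried by the known target, never by $f$; this is the idea your proposal is missing.
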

	
	\begin{proof}According, for e.g., to Theorem \ref{thm-noninvariance}, it is enough to prove the existence of a residual set of functions $f\in H(\D)$ such that $f\circ\Phi\in\UU_A(\D,\rho)$ for every automorphism of the form $\Phi_w(z)=\frac{z+w}{1+\overline{w}z}$, $w\in \D$.
		
		Let $(\vp_n)_n$ be an enumeration of the polynomials with coefficients in $\Q +i\Q$ and let $(K_n)_n$ be a sequence of compact subsets of $\T$, different from $\T$, such that for any compact set $K\subset \T$ different from $\T$, there exists $n\in \N$ such that $K\subset K_n$.
		
		We claim that, for any fixed integers $k,m,n$, there exists a sequence $(w_l(k,m,n),\delta_l(k,m,n))_l$ in $\D\times (0,+\infty)$ such that the following conditions hold:
		\begin{enumerate}[(i)]
			\item for every $w\in \D$, there exists $l \in \N$ such that $w\in D(w_l(k,m,n),\delta_l(k,m,n))$;
			\item for every $l\in \N$, the set
			\[
			F(k,m,n,l):=\bigcup_{w\in \overline{D(w_l(k,m,n),\delta_l(k,m,n))}}\Phi_w(r_kK_n).
			\]
			is contained in $\D$ and has connected complement in $\C$;
			\item for every $l\in \N$ and $\tau \in D(w_l(k,m,n),\delta_l(k,m,n))$,
			\[
			\sup_{z\in \overline{D}}|\vp_m(\Phi_{w_l(k,m,n)}^{-1}(\Phi_{\tau}(r_kz)))-\vp_m(r_kz)|<\frac{1}{m}.
			\]
		\end{enumerate}
		
		Let us check the claim first. We fix $k,m,n\in \N$. If $L$ is a compact set in $\D$ and $w\in L$ is fixed, the uniform continuity of the maps $(\tau,z)\in L\times\overline{\D} \mapsto \Phi_{\tau}(z)$, $z\in \overline{\D}\mapsto \Phi_w^{-1}(z)$ and $z\in \overline{\D}\mapsto \vp_m(z)$ implies the existence of $\delta(k,m,n)\in (0,+\infty)$ such that (ii) and (iii) hold (with $w$ instead of $w_l(k,m,n)$ and $\delta(k,m,n)$ instead of $\delta_l(k,m,n)$). To obtain the desired sequence it is then enough to cover $L$ by finitely many discs of the form $D(w,\delta(k,m,n))$ and to exhaust $\D$ by a sequence of compact sets.
		
		Then, let a sequence $(w_l(k,m,n),\delta_l(k,m,n))_l$ in $\D\times (0,+\infty)$ be given by the claim. Denoting by $\UU$ the set of all functions $f\in H(\D)$ such that $f\circ \Phi_w \in \UU_A(\D)$ for any $w\in \D$, we also claim that
		\begin{equation}\label{eq-all-auto}
			\UU\supset \bigcap_{k,m,n,l\in \N}\bigcup_{k'\geq k}\left\{f\in H(\D):\, \sup_{z\in F(k',m,n,l)}|f(z)-\vp_m(\Phi_{w_l(k',m,n)}^{-1}(z))|<\frac{1}{m}\right\}.
		\end{equation}
		Indeed, let $f$ be in the right-hand side of the above inclusion and fix $\varepsilon>0$, a compact set $K$ in $\T$ different from $\T$, $\vp \in C(K)$ and $w\in \D$. First let $k,n,m\in \N$ be such that $K\subset K_n$, $|\vp(r_{k'}z)-\vp(z)|<\varepsilon/3$ for any $z\in K_n$ and any $k'\geq k$, $1/m < \varepsilon/3$ and $\sup_{z\in K_n}|\vp_m(r_{k'}z)-\vp(r_{k'}z)|< \varepsilon/3$ for every $k'\geq k$. Note that the choice of $m$ can be done by an application of Mergelyan's theorem. Now, let $k'\geq k$ be given by the fact that $f$ belongs to the right-hand side of $\eqref{eq-all-auto}$. By (i), there exists $l'\in \N$ such that $w\in D(w_{l'}(k',m,n),\delta_{l'}(k',m,n))$. Thus we get for every $z\in K$,
		\begin{equation*}\begin{array}{ccc}\left|f\circ \Phi_w(r_{k'}z) - \vp(r_{k'}z)\right| & \leq & \left|f\circ \Phi_w(r_{k'}z) - \vp_m(r_{k'}z)\right| + \left|\vp(r_{k'}z) - \vp_m(r_{k'}z)\right|\\
				& \leq & \left|f\circ \Phi_w(r_{k'}z) - \vp_m(\Phi_{w_{l'}(k',m,n)}^{-1}(\Phi_{w}(r_{k'}z)))\right| \\ & & \hfill + \left|\vp_m(\Phi_{w_{l'}(k',m,n)}^{-1}(\Phi_{w}(r_{k'}z)))-\vp_m(r_{k'}z)\right| + \frac{\varepsilon}{3}\\
				& \overset{\eqref{eq-all-auto}\text{ and }(iii)}{\leq} & \frac{\varepsilon}{3} + \frac{\varepsilon}{3} + \frac{\varepsilon}{3}.
		\end{array}\end{equation*}
		
		To conclude by the Baire Category Theorem, it is enough to check that the set
		\[
		\bigcup_{k'\geq k}\left\{f\in H(\D):\, \sup_{z\in F(k',m,n,l)}|f(z)-\vp_m(\Phi_{w_l(k',m,n)}^{-1}(z))|<\frac{1}{m}\right\}
		\]
		is open and dense in $H(\D)$ for any $k\in \N$. It is clear that it is open, so let us focus on the density. Let us then fix $k\in \N$, a closed disc $L$ contained in $\D$, $g\in H(\D)$ and $\varepsilon>0$. By uniform continuity of the map $(\tau,z)\in K \times \overline{\D} \mapsto \Phi_{\tau}(z)$ for any compact set $K$ in $\D$, and since automorphisms of $\D$ maps $\T$ into $\T$, there exists $k'\geq k$ such that the set $L\cap F(k',m,n,l)=\emptyset$. Then, by (ii) the set $L\cup F(k',m,n,l)$ is compact and has connected complement. Therefore we can conclude by applying Mergelyan's theorem that gives us a polynomial $P$ that uniformly approximates $g$ in $L$ up to $\varepsilon$, and $\vp_m\circ \Phi_{w_l(k',m,n)}^{-1}$ on $F(k',m,n,l)$ up to $1/m$.
	\end{proof}

	Let us denote by $\UU$ the residual set of all functions in $H(\D)$ such that $f\circ \Phi \in \UU_A(\D,\rho)$ for any automorphism $\Phi$ of $\D$. By definition, $\UU$ is a subset of $\UU_A(\D,\rho)$ and is invariant under composition from the right by any automorphism of the unit disc. Thus, as a corollary of Theorem \ref{left_preserve} and Theorem \ref{thm-all-auto}, we get the following:
	
	\begin{cor}For any $f\in \UU$, any non-constant holomorphic function $g:f(\D)\to \C$, and any automorphism $\Phi$ of $\D$, the function $g\circ f \circ \Phi$ belongs to $\UU$. 
	\end{cor}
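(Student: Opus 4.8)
The plan is to unwind the definition of $\UU$ and reduce everything to Theorem \ref{left_preserve} together with the group structure of $\mathrm{Aut}(\mathbb{D})$. By construction (Theorem \ref{thm-all-auto}), $\UU$ is precisely the set of those $h\in H(\D)$ for which $h\circ\Psi\in\UU_A(\D,\rho)$ for \emph{every} automorphism $\Psi$ of $\D$. Hence, given $f\in\UU$, a non-constant holomorphic function $g:f(\D)\to\C$, and an automorphism $\Phi$ of $\D$, what I must show is that $(g\circ f\circ\Phi)\circ\Psi\in\UU_A(\D,\rho)$ for an arbitrary automorphism $\Psi$ of $\D$.

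First I would observe that $\Psi':=\Phi\circ\Psi$ is again an automorphism of $\D$, and that $(g\circ f\circ\Phi)\circ\Psi=g\circ(f\circ\Psi')$. Since $f\in\UU$, the definition of $\UU$ applied to the automorphism $\Psi'$ gives $f\circ\Psi'\in\UU_A(\D,\rho)$.

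Next I would check that $g$ has the domain required to apply Theorem \ref{left_preserve} to the function $F:=f\circ\Psi'$: since $\Psi'$ is an automorphism of $\D$, it maps $\D$ onto $\D$, so $F(\D)=f(\Psi'(\D))=f(\D)$. Thus $g:F(\D)\to\C$ is non-constant and holomorphic, and Theorem \ref{left_preserve} yields $g\circ F=g\circ f\circ\Psi'\in\UU_A(\D,\rho)$, i.e.\ $(g\circ f\circ\Phi)\circ\Psi\in\UU_A(\D,\rho)$. As $\Psi$ was arbitrary, $g\circ f\circ\Phi\in\UU$; and it lies in $H(\D)$ since $\Phi:\D\to\D$, $f:\D\to f(\D)$ and $g:f(\D)\to\C$ are all holomorphic.

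I do not expect a genuine obstacle here: the corollary is a formal consequence of Theorem \ref{left_preserve}, of the fact that $\mathrm{Aut}(\mathbb{D})$ is closed under composition, and of the surjectivity of automorphisms of $\D$ onto $\D$. The only point worth spelling out is the domain bookkeeping in the application of Theorem \ref{left_preserve}, namely the identity $f(\Psi'(\D))=f(\D)$, which guarantees that the same $g$ can be composed on the left regardless of which automorphism we precompose with.
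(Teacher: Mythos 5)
Your argument is correct and is exactly the (largely implicit) proof the paper intends: unwind the definition of $\UU$, use that $\mathrm{Aut}(\D)$ is closed under composition so that $f\circ(\Phi\circ\Psi)\in\UU_A(\D,\rho)$, and then apply Theorem \ref{left_preserve}, with the surjectivity of automorphisms guaranteeing $f(\Psi'(\D))=f(\D)$ so that $g$ is defined where needed. No issues.
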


	\section*{Acknowledgment}We are thankful to Prof. Alexander Eremenko for bringing our attention to Iversen's theorem. We would also like to thank Dr Dimitris Papathanasiou who suggested us an adaptation of \cite[Theorem 1.6]{BesPapathanasiou2020} in order to prove Theorem \ref{alg-papath}.
	
	\bibliographystyle{amsplain}
	\bibliography{refs}
	
\end{document}